\newcounter{generalnumbering}   \numberwithin{generalnumbering}{section}
\theoremstyle{plain}    \newtheorem{theorem}[generalnumbering]{Theorem}
\theoremstyle{plain}    
\theoremstyle{definition}   \newtheorem{definition}[generalnumbering]{Definition}
\theoremstyle{definition}   \newtheorem{example}[generalnumbering]{Example}
\theoremstyle{plain}    \newtheorem{proposition}[generalnumbering]{Proposition}
\theoremstyle{plain}    \newtheorem{lemma}[generalnumbering]{Lemma}
\theoremstyle{definition}   \newtheorem{remark}[generalnumbering]{Remark}
\newcommand{\namefordifferentenvironment}{}
\theoremstyle{plain}    \newtheorem{plainstyle}[generalnumbering]{\namefordifferentenvironment}
\theoremstyle{plain}    \newtheorem*{plainstyle*}{\namefordifferentenvironment}
\theoremstyle{definition}    \newtheorem{definitionstyle}[generalnumbering]{\namefordifferentenvironment}
\theoremstyle{definition}    \newtheorem*{definitionstyle*}{\namefordifferentenvironment}
\newenvironment{penv*}[1]{\renewcommand{\namefordifferentenvironment}{#1}\begin{plainstyle*}}{\end{plainstyle*}}
\newenvironment{denv*}[1]{\renewcommand{\namefordifferentenvironment}{#1}\begin{definitionstyle*}}{\end{definitionstyle*}}
\DeclareMathOperator{\Se}{\mathcal{S}}
\DeclareMathOperator{\id}{id}
\newcommand\restr[2]{\ensuremath{\left.#1\right|_{#2}}}
\title{\Large Globalization of partial inverse semigroupoid actions on sets}
\author{Paulinho Demeneghi\footnote{Universidade Federal de Santa Catarina.}, Felipe Augusto Tasca\footnote{Instituto Federal do Paran\'{a}.}}
\date{}
\begin{document}

\maketitle

\begin{abstract}
We prove that every partial action of an inverse semigroupoid on a set admits a universal globalization. Moreover, we show that our construction gives a reflector from the category of partial actions on the full subcategory of global actions. Finally, we investigate if the mediating function given by the universal property of our construction is injective.
% \

% \keywords{\textbf{\textit{Keywords:}} \textit{ inverse semigroupoids; partial actions; globalization; reflector; universal globalization.}}
\end{abstract}

\let\thefootnote\relax\footnotetext{The authors report there are no competing interests to declare.}

%%%%%%%%%%%%%%%%%%%%%%%%%%%%%%%%%%%%%%%%%%%%%%%%%%%%%%%%%%%%%%%%%%

\section{Introduction}

The notion of partial action has appeared for the first time in the literature in \cite{exelorigem}, in a work developed by Exel in the context of C*-algebras where the concept of crossed product of a C*-algebra by a partial action of the infinite cyclic group was introduced. The work of Exel was generalized afterward by McClanahan in \cite{mcclanahan} where a formal definition of crossed product of a C*-algebra by a partial action of a discrete group was presented. Since then, partial actions of groups were extensively explored and studied in many other areas.

The notion of partial action of groups on sets was introduced by Exel in \cite{exel1998} motivated by the great interest in the concept of partial actions of groups on C*-algebras. Since then, the concept of group partial actions have been appeared in many  contexts.

Global actions naturally induce partial actions via a restriction process and it is an important problem to know when a partial action can be obtained through the restriction of a global one. Explicitly, given a partial action of a group $G$ on a set $X$, one can obtain a new action by restricting the action of $G$ to a subset $Y \subseteq X$. The resultant action is a partial action of $G$ on $Y$ which is not, in general, a global action.

On the other side, given a partial action of $G$ on a set $Y$, one may ask if there exists a global action of $G$ on a set $X$ containing $Y$ such that the original partial action of $G$ on $Y$ can be obtained by the restriction of the global action of $G$ on $X$. This question motivates the notion of globalization of partial actions. A globalization of a partial action $\theta$ of a group $G$ on a set $Y$ is (isomorphic to) a global action $\tilde{\theta}$ of $G$ on a set $X \supseteq Y$ such that $\theta$ may be obtained by the restriction of $\tilde{\theta}$ to $Y$.

The problem of the existence of globalizations was initially studied by Abadie in \cite{Abadie,Abadie2} and later by Kellendonk and Lawson in \cite{kellendonklawson} in an independent way. In \cite{Abadie}, Abadie proved that every partial action of a group on a set admits a universal globalization. In this same paper, Abadie has also studied enveloping actions of groups on topological spaces and C*-algebras. In \cite{kellendonklawson}, in addition to the context of sets and topological spaces, Kellendonk and Lawson also studied partial action on semi-lattices.

Motivated by the problem of the existence of globalizations for partial actions, many works were developed in many other contexts exploring: partial actions of groups on objects like topological spaces, rings, C*-algebras; partial actions of monoids, semigroups, inverse semigroups, Hopf algebras, ordered groupoids, groupoids and small categories; and even contexts involving twisted actions. We may cite \cite{eliezer, bagio, Dirceu, thaisa2021, dokuchaev2, dokuchaev, Gilbert, mykola, ganna, megrelis, Patrik, steinberg} as example. Furthermore, we indicate Dokuchaev extensive survey on the subject \cite{dokuchaevsurvey}.

In this paper we are interested in studying the problem of globalization of inverse semigroupoid partial actions on sets. The notion of semigroupoids includes both the notions of categories and semigroups. A semigroupoid may be thought as a set equipped with a partially defined associative operation. Actually, there are two notions of semigroupoids in the literature and both notions are obtained by weakening category axioms. The first notion of semigroupoid is presented by Tilson in \cite{tilson} in order to treat category ideals independently of their parent categories. Roughly speaking, a Tilson semigroupoid is a small category which may fail to have identity arrows. The second notion is presented by Exel in \cite{exelsemigroupoid1} with the goal of providing a unified treatment for C*-algebras associated to combinatorial objects. As a matter of fact, Exel's notion is more general than the Tilson's one, since Tilson semigroupoids comes with an underlying graph structure, peculiar from categories, which completely characterizes which pairs of morphisms can be composed, meanwhile Exel semigroupoids comes equipped with a set of composable pairs which may not admit a compatible underlying graph structure. Roughly speaking, in Exel semigroupoids it may not be possible to compose morphisms even though the codomain of the first agrees with the domain of the second.

The inverse semigroupoids framework provides a unified treatment for inverse semigroups and groupoids and, it turns out that, despite the difference between the two existing notions of semigroupoids, when the unique pseudo-inverse axiom is added, every Exel inverse semigroupoid can be realized as a Tilson inverse semigroupoid in a unique way, as proved by Cordeiro in \cite{Luiz1}. So that, for a better fit in our approach, we have opted by Tilson's definition.

Partial actions of inverse semigroupoids were introduced by Cordeiro in \cite{Luiz2, Luiz1}, where an extensive study on semigroupoids is developed. Based on groupoid partial actions, in this work we present an adaptation of this definition, which is equivalent to Cordeiro's definition in this context.

We prove that, just as group partial actions on sets, every inverse semigroupoid partial action on a set admits a universal globalization. This result generalizes simultaneously Theorem 4.23 of \cite{Abadie} for group partial actions on sets, Theorem 6.10 of \cite{hollings} for inverse semigroups partial actions on sets and Theorem 4 (Remark 29) of \cite{Patrik} for groupoids partial actions on sets. Moreover, we prove that our constructions gives a reflector from the category of partial actions on the full subcategory of global actions.

The paper is structured as follows: in the second section we first present the definitions of semigroupoid and inverse semigroupoid adopted, as well as basics facts about these structures that we are going to need. Then we proceed to define inverse semigroupoid partial actions on sets and introduce the necessary machinery for the development of this work. In the third section we fix a partial action $\theta$ of an inverse semigroupoid $\Se$ on a set $X$ and then construct a globalization of $\theta$ which we further prove that is a reflector of $\theta$ in the full subcategory of global actions (Theorem \ref{o_teorema}) and, hence, is universal among all globalizations of $\theta$ (Theorem \ref{o_teorema_2}). Finally, we explore some examples, one of them, showing that the mediating function given by the universal property of our construction may not be injective. However, we show that it is injective on some special subsets (Proposition \ref{a_proposicao}).

%%%%%%%%%%%%%%%%%%%%%%%%%%%%%%%%%%%%%%%%%%%%%%%%%%%%%%%%%%%%%%%%%%%
%%%%%%%%%%%%%%%%%%%%%%%%%%%%%%%%%%%%%%%%%%%%%%%%%%%%%%%%%%%%%%%%%%%
%%%%%%%%%%%%%%%%%%%%%%%%%%%%%%%%%%%%%%%%%%%%%%%%%%%%%%%%%%%%%%%%%%%

\section{Inverse Semigroupoids and Partial Actions}

In this section we present the necessary machinery involving inverse semigroupoids and inverse semigroupoid partial actions on sets. For further references, we recommend \cite{Luiz1, liu}.

We first present the definition of semigroupoid we shall adopt.

\begin{definition}\label{def_semigroupoid}
A \emph{semigroupoid} is a quintuple
$(\Se,\Se^{(0)},d,c,\mu)$, in which
\begin{itemize}
    \item $\Se$ and $\Se^{(0)}$ are sets called \emph{arrow} and \emph{object} sets, respectively;
    \item $d,c: \Se \to \Se^{(0)}$ are maps called \emph{domain} and \emph{codomain} maps, respectively;
    \item $\mu: \Se^{(2)} \to \Se$ is a map called \emph{multiplication} map (denoted by juxtaposition $\mu(s,t)=st$) defined in the set $\Se^{(2)}\subseteq \Se \times \Se$ of \emph{composable pairs} consisting of all pairs $(s,t) \in \Se \times \Se$ such that $d(s)=c(t)$.
\end{itemize}
satisfying
\begin{enumerate}[label=(\roman*)]
    \item $d(st)=d(t)$ and $c(st)=c(s)$ for any composable pair $(s,t) \in \Se^{(2)}$;
    \item $(ps)t=p(st)$ whenever $(p,s), (s,t) \in \Se^{(2)}$.
\end{enumerate}
\end{definition}

Notice that, if $p,s,t \in \Se$ are elements such that $(p,s)$ and $(s,t)$ lies in $\Se^{(2)}$, then by axiom (i) above $(ps,t)$ and $(p,st)$ also lies in $\Se^{(2)}$, and hence axiom (ii) does make sense.

By an abuse of notation, whenever there is no chance of confusion, we shall use just $\Se$ to refer to semigroupoid $(\Se,\Se^{(0)},d,c,\mu)$. 

Notice that the class of semigroupoids includes semigroups and small categories. Semigroups are precisely the semigroupoids with a single object (and not necessarily an identity element) and small categories are precisely the semigroupoids containing one identity for each object.

To be precise, by an identity element we mean an element $e \in \Se$ in a semigroupoid $\Se$ satisfying $es=s$ and $te=t$ for every $s,t \in \Se$ such that $(e,s), (t,e) \in \Se^{(2)}$. If necessary, we may say that $e \in \Se$ is an identity over an object $u \in \Se^{(0)}$ to emphasize that $e$ is an identity element such that $d(e)=c(e)=u$.

Another sort of elements which play an essential role in this work are idempotent elements. By an idempotent element in a semigroupoid $\Se$, we mean an element $e \in \Se$ such that $(e,e) \in \Se^{(2)}$ and $ee=e$. Again, we may emphasize that $d(e)=c(e)=u$ by saying that $e$ is an idempotent over $u \in \Se^{(0)}$. We denote by $E(\Se)$ the set of idempotent elements of $\Se$, which evidently includes the identity elements.

As mentioned before, we are mainly interested in inverse semigroupoids, as follows.

\begin{definition}\label{def_inv_semigroupoid}
A semigroupoid $\Se$ is an \emph{inverse semigroupoid} if for every $s \in \Se$ there exists a unique $s^* \in \Se$, called the \emph{inverse} of $s$, such that $(s,s^*), (s^*,s) \in \Se^{(2)}$ and
$$ss^*s=s \quad \text{ and } \quad s^*ss^*=s^*.$$
\end{definition}

Just like the class of semigroupoids includes small categories and semigroups, the class of inverse semigroupoids includes groupoids and inverse semigroups.

We shall see now that many inverse semigroup properties keep working for this more general setting. For example, usual rules for inverses are preserved: $(s^*)^*=s$ for every $s \in \Se$ and if $(s,t) \in \Se^{(2)}$ then $(t^*,s^*) \in \Se^{(2)}$ and $(st)^*=t^*s^*$. For every element $s \in \Se$ we have  $s^*s, ss^* \in E(\Se)$ and for every $e \in E(\Se)$ we have $e^*=e$. Moreover, the idempotent elements commute in the sense that, if $e,f \in E(\Se)$ and $(e,f) \in \Se^{(2)}$, then $(f,e) \in \Se^{(2)}$ and $ef=fe$. This implies that for every $s \in \Se$ and $e \in E(\Se)$ such that $(s,e) \in \Se^{(2)}$ we have $ses^* \in E(\Se)$.

The commutativity of $E(\Se)$ enables us to define a canonical order relation on any inverse semigroupoid $\Se$ just like in the inverse semigroup case. More precisely, if $s,t \in \Se$, we say that $s \leq t$ if $d(s)=d(t)$, $c(s)=c(t)$ and one of the following equivalent statements holds:
\begin{enumerate}[label=(\roman*)]
    \item $s=ts^*s$;
    \item $s=te$ for some $e\in E(\mathcal{S})$ such that $(t,e) \in \Se^{(2)}$;
    \item $s=ss^*t$;
    \item $s=ft$ for some $f\in E(\mathcal{S})$ such that $(f,t) \in \Se^{(2)}$.
\end{enumerate}

Notice that, if $s,t \in \Se$, then $s \leq t$ if and only if $s^* \leq t^*$ and that, if $s_1, s_2, t_1, t_2 \in \Se$ are such that $(s_1,s_2) \in \Se^{(2)}$, $s_1\leq t_1$ and $s_2\leq t_2$, then $(t_1,t_2) \in \Se^{(2)}$ and $s_1s_2\leq t_1t_2$. Moreover, it is immediate from the definition that, if $e,f \in E(\Se)$ and $(e,f)\in \Se^{(2)}$, than $ef\leq e,f$. As a consequence of the latter, $ses^*\leq ss^*$ whenever $s \in \Se$, $e \in E(\Se)$ and $(s,e) \in \Se^{(2)}$.

Before we proceed to the definition of partial action, we present a inverse semigroupoid example which is not an inverse semigroup, nor a groupoid (not even a category).

\begin{example}\label{ex_pt1}
As a set, $\Se$ is given by $\Se=\{a,a^*,b,b^*,a^*a,aa^*,b^*b,bb^*\}$. The object set is a two element set and the graph structure may be viewed in the following picture.

\begin{center}
\setlength{\unitlength}{3mm}
\begin{picture}(10,10)
    \put(2,5){\circle*{0.2}}
    \put(12,5){\circle*{0.2}}
				
    \qbezier(2,5)(7,10)(12,5)
    \put(12,5){\vector(1,-1){0}}
		    
    \qbezier(2,5)(7,0)(12,5)
    \put(2,5){\vector(-1,1){0}}

    \qbezier(2,5)(2,3)(0,3)
    \qbezier(0,3)(-2,3)(-2,5)
    \qbezier(2,5)(2,7)(0,7)
    \qbezier(0,7)(-2,7)(-2,5)
    \put(2,5){\vector(1,4){0}}
    \put(2,5){\vector(0,1){0}}
    
    \qbezier(2,5)(2,2)(-1.5,2)
    \qbezier(-1.5,2)(-5,2)(-5,5)
    \qbezier(2,5)(2,8)(-1.5,8)
    \qbezier(-1.5,8)(-5,8)(-5,5)
    
    \qbezier(2,5)(2,1)(-2,1)
    \qbezier(-2,1)(-6,1)(-6,5)
    \qbezier(2,5)(2,9)(-2,9)
    \qbezier(-2,9)(-6,9)(-6,5)
    
    \qbezier(2,5)(2,0)(-3,0)
    \qbezier(-3,0)(-7.5,0)(-7.5,5)
    \qbezier(2,5)(2,10)(-3,10)
    \qbezier(-3,10)(-7.5,10)(-7.5,5)
    
    \qbezier(2,5)(2,-1)(-4,-1)
    \qbezier(-4,-1)(-10,-1)(-10,5)
    \qbezier(2,5)(2,11)(-4,11)
    \qbezier(-4,11)(-10,11)(-10,5)
   
    \qbezier(12,5)(16,1)(16,5)
    \qbezier(12,5)(16,9)(16,5)
    \put(12,5){\vector(-1,1){0}}

    \put(6.6,8){$a$}
    \put(6.8,1){$a^*$}
    \put(-4.3,4.5){$a^*a$}
    \put(-5.8,4){$b$}
    \put(-7.3,4.5){$b^*$}
    \put(-9.6,5){$b^*b$}
    \put(-11.9,5.5){$bb^*$}
    \put(16.5,4.5){$aa^*$}
\end{picture}
\end{center}

Finally, the multiplication table is given by:
\begin{center}
\begin{tabularx}{0.8\textwidth}{
  | >{\centering\arraybackslash}X 
  | >{\centering\arraybackslash}X 
  | >{\centering\arraybackslash}X 
  | >{\centering\arraybackslash}X 
  | >{\centering\arraybackslash}X 
  | >{\centering\arraybackslash}X 
  | >{\centering\arraybackslash}X
  | >{\centering\arraybackslash}X 
  | >{\centering\arraybackslash}X |}
 \hline
 $\cdot$ & $a$ & $a^*$ & $b$ & $b^*$ & $a^*a$ & $aa^*$ & $b^*b$ & $bb^*$  \\
 \hline
 $a$  & $-$  & $aa^*$ & $a$ & $a$ & $a$  & $-$ & $a$ & $a$\\ 
\hline
$a^*$  & $a^*a$  & $-$ & $-$ & $-$ & $-$  & $a^*$ & $-$ & $-$\\
\hline
 $b$  & $-$  & $a^*$ & $a^*a$ & $bb^*$ & $a^*a$  & $-$ & $b$ & $a^*a$\\ 
 \hline
$b^*$  & $-$  & $a^*$ & $b^*b$ & $a^*a$ & $a^*a$  & $-$ & $a^*a$ & $b^*$\\ 
\hline
$a^*a$  & $-$  & $a^*$ & $a^*a$ & $a^*a$ & $a^*a$  & $-$ & $a^*a$ & $a^*a$\\ 
\hline
 $aa^*$  & $a$  & $-$ & $-$ & $-$ & $-$  & $aa^*$ & $-$ & $-$\\
\hline
 $b^*b$  & $-$  & $a^*$ & $a^*a$ & $b^*$ & $a^*a$  & $-$ & $b^*b$ & $a^*a$\\ 
 \hline
$bb^*$  & $-$  & $a^*$ & $b$ & $a^*a$ & $a^*a$  & $-$ & $a^*a$ & $bb^*$\\ 
\hline
\end{tabularx}
\end{center}
in which the symbol ``$-$'' means that the product is undefined.

Notice that, the idempotent set is given by $E(\Se)=\{a^*a, aa^*, b^*b, bb^*\}$.
\end{example}

We now proceed to present the definition of a partial action of an inverse semigroupoid on a set.

\begin{definition}\label{our_def}  
A \emph{partial action} $\theta$ of an inverse semigroupoid $\mathcal{S}$ on a set $X$ is a pair
$$\theta=\big(\{X_s\}_{s \in \Se}, \{\theta_s\}_{s \in \Se}\big)$$
consisting of a collection $\{X_s\}_{s \in \Se}$ of subsets of $X$ and a collection $\{\theta_s\}_{s \in \Se}$ of maps
$$\theta_s \colon X_{s^*} \to X_s$$
such that
\begin{enumerate}[label=(P\arabic*)]
    \item\label{p1} $\theta_e=\id_{X_e}$ for all $e \in E(\Se)$. Moreover, for all $x \in X$, there exists $e \in E(\Se)$ such that $x \in X_e$;
    \item\label{p2} $X_s \subseteq X_{ss^*}$ for every $s \in \Se$;
    \item\label{p3} $\theta_t^{-1}(X_t \cap X_{s^*})=X_{(st)^*}\cap X_{t^*}$ for all $(s,t)\in\mathcal{S}^{(2)}$ and, moreover, $\theta_s\big(\theta_t(x)\big)=\theta_{st}(x)$ for all $x \in X_{(st)^*}\cap X_{t^*}$.
\end{enumerate}

We say that $\theta$ is a \emph{global action} if moreover it satisfies: 
\begin{enumerate}[label=(P\arabic*)]\setcounter{enumi}{3}
    \item\label{pglob} $X_{s} = X_{ss^*}$ for every $s \in \Se$ (i.e., equality holds in \ref{p2}).
\end{enumerate}

By an $\Se$-\emph{set} we shall mean a pair $(X,\theta)$ in which $X$ is a set, $\Se$ is an inverse semigroupoid and $\theta$ is a partial action of $\Se$ on $X$. We shall, moreover, say that $(X,\theta)$ is a \emph{global} $\Se$-\emph{set} if $\theta$ is a global action.
\end{definition}

We should also point out that, for an $\Se$-\emph{set} $(X,\theta)$, unless explicitly said otherwise, we will use the same symbols $X$ and $\theta$, indexed by elements of $\Se$, to denote the members of the family of subsets of $X$ and the members of the family of maps that constitute $\theta$, respectively. That is, we will implicitly assume that $\theta$ is given by $(\{X_s\}_{s \in \Se}, \{\theta_s\}_{s \in \Se})$.

Before we proceed, let us point some observations about the last definition.

\begin{remark}\label{obs1}
First, notice that $\bigcup_{e\in E(\Se)} X_e=X$ by \ref{p1}. This is not a very restrictive requirement since, in its absence, we may replace $X$ by the union of the domains $X_e$, leading to an action of $\Se$ on $\bigcup_{e\in E(\Se)} X_e$ satisfying all the desired properties. Actually, some authors refer to partial actions with this extra requirement as \emph{non-degenerate} partial actions. We have chosen to not follow this approach and ask for it directly in the definition.

Replacing $s$ by $s^*$ in \ref{p2}, we deduce that the domain $X_{s^*}$ of $\theta_s$ is contained in the domain $X_{s^*s}$ of $\theta_{s^*s}$ and, moreover, by \ref{pglob}, $\theta_s$ and $\theta_{s^*s}$ share domains if the action is global.

Given $s,t \in \Se$, we may not be able to compose $\theta_s$ and $\theta_t$ in the traditional way, since the image of $\theta_t$ may not be contained in the domain of $\theta_s$. However, we will use the notation $\theta_s\circ\theta_t$ to refer to the map defined on the largest domain in which the expression $\theta_s(\theta_t(x))$ does make sense. Explicitly, the domain and codomain of $\theta_s\circ\theta_t$ are considered to be $\theta_t^{-1}(X_t\cap X_{s^*})$ and $\theta_s(X_{s^*}\cap X_t)$, respectively.

Hence, under the convention explained in the last paragraph, we can deduce from \ref{p3} that, whenever $(s,t)\in\Se^{(2)}$, the domain $\theta_t^{-1}(X_t\cap X_{s^*})$ of $\theta_s\circ\theta_t$ is contained in the domain $X_{(st)^*}$ of $\theta_{st}$ and $\theta_s\circ\theta_t$ coincides with $\theta_{st}$ in the domain of $\theta_s\circ\theta_t$. In other words, from \ref{p3} we deduce that $\theta_{st}$ is an extension\footnote{We shall use the term extension even though the codomain of the functions involved may not agree with each other.} of $\theta_s\circ\theta_t$ for $(s,t)\in\Se^{(2)}$, which we shall denote by $\theta_s\circ\theta_t\subseteq\theta_{st}$. That is, we shall use the symbol ``$\subseteq$'' to express that the function on the right-hand side is an extension of the one in the left-hand side.
\end{remark}

\begin{proposition}\label{inverse}
Let $\Se$ be an inverse semigroupoid and $(X,\theta)$ an $\Se$-set. For every $s \in \Se$, $\theta_s$ is a bijection from $X_{s^*}$ onto $X_s$ and, moreover, $\theta_s^{-1}=\theta_{s^*}$.
\end{proposition}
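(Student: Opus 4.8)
The plan is to show directly that $\theta_{s^*}$ is a two-sided inverse of $\theta_s$; this yields at once that $\theta_s\colon X_{s^*}\to X_s$ is a bijection with $\theta_s^{-1}=\theta_{s^*}$. The only facts needed are the ones already recorded before the statement — that $(s,s^*),(s^*,s)\in\Se^{(2)}$, that $s^*s,ss^*\in E(\Se)$, that $e^*=e$ for $e\in E(\Se)$ and $(s^*)^*=s$ — together with the axioms \ref{p1}, \ref{p2} and \ref{p3}.

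First I would apply \ref{p3} to the composable pair $(s^*,s)\in\Se^{(2)}$. Taking the first entry to be $s^*$ and the second to be $s$ in \ref{p3}, and using $(s^*)^*=s$ together with $(s^*s)^*=s^*s$, the domain identity becomes $\theta_s^{-1}(X_s\cap X_s)=X_{s^*s}\cap X_{s^*}$; since $\theta_s$ takes values in $X_s$ the left-hand side is $X_{s^*}$, and by \ref{p2} applied to $s^*$ we have $X_{s^*}\subseteq X_{s^*s}$, so both sides equal $X_{s^*}$. The second half of \ref{p3} then says that, for every $x\in X_{s^*}$,
$$\theta_{s^*}\big(\theta_s(x)\big)=\theta_{s^*s}(x).$$
Here $\theta_s(x)\in X_s=X_{(s^*)^*}$, so the left-hand side is legitimate; moreover $s^*s\in E(\Se)$ gives $\theta_{s^*s}=\id_{X_{s^*s}}$ by \ref{p1}, and since $x\in X_{s^*}\subseteq X_{s^*s}$ we get $\theta_{s^*s}(x)=x$. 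Hence $\theta_{s^*}\circ\theta_s=\id_{X_{s^*}}$, and in particular $\theta_s$ is injective.

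Next I would rerun exactly the same computation with $s$ and $s^*$ interchanged — which is legitimate because $(s^*)^*=s$ — applying \ref{p3} to $(s,s^*)\in\Se^{(2)}$ to obtain $\theta_s\big(\theta_{s^*}(y)\big)=y$ for every $y\in X_s$. Since $\theta_{s^*}$ maps $X_s$ into $X_{s^*}$, this shows every element of $X_s$ lies in the image of $\theta_s$, that is, $\theta_s$ is onto $X_s$. Combining the two halves, $\theta_s$ is a bijection from $X_{s^*}$ onto $X_s$ and $\theta_s^{-1}=\theta_{s^*}$.

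I do not anticipate a genuine obstacle; the argument is a direct unwinding of the definitions. The only point demanding attention is the index bookkeeping in \ref{p3}: one must feed it the pair $(s^*,s)$ (not $(s,s^*)$) to produce the composite $\theta_{s^*}\circ\theta_s$ and the idempotent $s^*s$, and one must remember to invoke \ref{p2} in the form $X_{s^*}\subseteq X_{s^*s}$ so that $\theta_{s^*s}$ restricts to the identity on the whole of $X_{s^*}$.
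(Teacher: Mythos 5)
Your proof is correct and follows essentially the same route as the paper: both apply \ref{p3} to the composable pairs $(s^*,s)$ and $(s,s^*)$ and then use \ref{p1} (with \ref{p2} to handle the domains) to see that $\theta_{s^*}\circ\theta_s$ and $\theta_s\circ\theta_{s^*}$ are identities on $X_{s^*}$ and $X_s$, respectively. The paper merely phrases this via the extension convention $\theta_{s^*}\circ\theta_s\subseteq\theta_{s^*s}$ from Remark \ref{obs1}, which you unwind explicitly.
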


\begin{proof}
Let $s \in \Se$. By \ref{p3} we deduce that $\theta_{s^*s}$ is an extension of $\theta_{s^*}\circ\theta_s$. By \ref{p1}, it follows that $\theta_{s^*s}$ is the identity map on $X_{s^*s}$. Hence, $\theta_{s^*}\circ\theta_s$ is the identity map on its domain, which is clearly $X_{s^*}$. Similarly, $\theta_s\circ\theta_{s^*}$ is the identity map on $X_s$.
\end{proof}

With this result in hands we can rewrite the domain $\theta_t^{-1}(X_t \cap X_{s^*})$ of $\theta_s\circ\theta_t$ as $\theta_{t^*}(X_t \cap X_{s^*})$. This enables us to obtain a characterization for the codomain $\theta_s(X_{s^*}\cap X_t)$ of $\theta_s\circ\theta_t$ in the same way \ref{p3} provides one for the domain of $\theta_s\circ\theta_t$.

\begin{proposition}\label{ran_composition}
Let $\Se$ be an inverse semigroupoid and $(X,\theta)$ an $\Se$-set. If $(s,t)\in\Se^{(2)}$, then
$$\theta_s(X_{s^*}\cap X_t)=X_s\cap X_{st}.$$
\end{proposition}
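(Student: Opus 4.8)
The plan is to deduce the claim from axiom \ref{p3}, applied not to $(s,t)$ itself but to the composable pair $(t^*,s^*)$, combined with Proposition \ref{inverse}. The guiding observation is that the set $\theta_s(X_{s^*}\cap X_t)$ we want to describe is exactly the pre-image of $X_{s^*}\cap X_t$ under the bijection $\theta_{s^*}\colon X_s\to X_{s^*}$ (whose inverse is $\theta_s$, by Proposition \ref{inverse}); so it is enough to identify that pre-image, and \ref{p3} is precisely a tool for computing pre-images of sets of the form $X_q\cap X_{p^*}$.

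First I would check that $(t^*,s^*)\in\Se^{(2)}$. Since $(s,t)\in\Se^{(2)}$ means $d(s)=c(t)$, and since $d(t^*)=c(t)$ and $c(s^*)=d(s)$ by the rules for inverses recalled before Example \ref{ex_pt1}, we get $d(t^*)=c(s^*)$, so $(t^*,s^*)$ is indeed composable. Then I would apply \ref{p3} with $t^*$ and $s^*$ playing the roles of $s$ and $t$, which gives
$$\theta_{s^*}^{-1}\big(X_{s^*}\cap X_{(t^*)^*}\big)=X_{(t^*s^*)^*}\cap X_{(s^*)^*}.$$
Using $(t^*)^*=t$, $(s^*)^*=s$, and $(t^*s^*)^*=(s^*)^*(t^*)^*=st$ (again from the rules for inverses), the right-hand side is $X_{st}\cap X_s$, while the left-hand side is the pre-image of $X_{s^*}\cap X_t$ under $\theta_{s^*}$.

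To finish, I would invoke Proposition \ref{inverse}: $\theta_{s^*}$ is a bijection from $X_s$ onto $X_{s^*}$ with inverse $\theta_s$, so the pre-image under $\theta_{s^*}$ of any subset of $X_{s^*}$ equals its image under $\theta_s$; in particular $\theta_{s^*}^{-1}(X_{s^*}\cap X_t)=\theta_s(X_{s^*}\cap X_t)$. Combined with the previous display this yields $\theta_s(X_{s^*}\cap X_t)=X_s\cap X_{st}$. I do not anticipate a real obstacle; the only thing needing care is the bookkeeping of inverses — verifying that $(t^*,s^*)$ is genuinely composable and that the symbol $\theta_{s^*}^{-1}$ occurring in \ref{p3} (a set-theoretic pre-image) may legitimately be replaced by $\theta_s$ via Proposition \ref{inverse}. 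If one prefers to avoid passing to $(t^*,s^*)$, an alternative is to use the extension property $\theta_s\circ\theta_t\subseteq\theta_{st}$ from Remark \ref{obs1} to rewrite the codomain of $\theta_s\circ\theta_t$ as $\theta_{st}\big(X_{(st)^*}\cap X_{t^*}\big)$ and then identify this image, but the route through $(t^*,s^*)$ is shorter and more direct.
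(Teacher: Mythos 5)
Your proposal is correct and is essentially the paper's own argument: apply \ref{p3} to the composable pair $(t^*,s^*)$ to get $\theta_{s^*}^{-1}(X_{s^*}\cap X_t)=X_{st}\cap X_s$, then use Proposition \ref{inverse} to replace the pre-image under $\theta_{s^*}$ by the image under $\theta_s$. The extra bookkeeping you do (composability of $(t^*,s^*)$ and $(t^*s^*)^*=st$) is exactly the justification the paper leaves implicit.
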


\begin{proof}
Let $(s,t)\in\Se^{(2)}$. Hence, $(t^*,s^*)\in\Se^{(2)}$ and, by \ref{p3}, $\theta_{s^*}^{-1}(X_{s^*} \cap X_t)=X_{st}\cap X_s$. The result now follows from the comment immediately before the statement.
\end{proof}

We now compare $\theta_s$ and $\theta_t$ when $s \leq t$.

\begin{proposition}\label{extension}
Let $\Se$ be an inverse semigroupoid and $(X,\theta)$ an $\Se$-set. If $s\leq t$, then $X_s \subseteq X_t$ and $\theta_s \subseteq \theta_t$.
\end{proposition}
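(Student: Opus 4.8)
The plan is to express the relation $s \le t$ through the identity $s = ss^*t$ (item (iii) in the list of equivalent conditions preceding the statement) and then to exploit the composable pair $(ss^*,t)$, which is indeed composable because $d(ss^*) = d(s^*) = c(s) = c(t)$. For the inclusion $X_s \subseteq X_t$, I would apply Proposition \ref{ran_composition} to this pair, obtaining $\theta_{ss^*}(X_{ss^*} \cap X_t) = X_{ss^*} \cap X_{ss^*t} = X_{ss^*} \cap X_s$. Since $ss^* \in E(\Se)$, axiom \ref{p1} makes $\theta_{ss^*}$ the identity on $X_{ss^*}$, so the left-hand side is $X_{ss^*} \cap X_t$; and axiom \ref{p2} gives $X_s \subseteq X_{ss^*}$, so the right-hand side is $X_s$. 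Hence $X_s = X_{ss^*} \cap X_t \subseteq X_t$. Running the same argument with the pair $s^* \le t^*$ (allowed because $s \le t \iff s^* \le t^*$, as recorded in the text) yields $X_{s^*} \subseteq X_{t^*}$; in particular $\theta_t$ is defined on all of $X_{s^*} = \dom(\theta_s)$.

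To finish, I must check that $\theta_t$ and $\theta_s$ coincide on $X_{s^*}$. Here I would apply axiom \ref{p3} to the same pair $(ss^*,t)$: it gives $\theta_t^{-1}(X_t \cap X_{ss^*}) = X_{(ss^*t)^*} \cap X_{t^*} = X_{s^*} \cap X_{t^*} = X_{s^*}$, together with $\theta_{ss^*}\big(\theta_t(x)\big) = \theta_{ss^*t}(x) = \theta_s(x)$ for every $x \in X_{s^*}$. The first equality tells us that, for such $x$, the element $\theta_t(x)$ lies in $X_t \cap X_{ss^*}$, hence in the domain $X_{ss^*}$ of the identity map $\theta_{ss^*}$; therefore $\theta_{ss^*}(\theta_t(x)) = \theta_t(x)$, and the displayed equation collapses to $\theta_t(x) = \theta_s(x)$. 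Combined with the domain inclusion $X_{s^*} \subseteq X_{t^*}$ from the previous paragraph, this is exactly $\theta_s \subseteq \theta_t$.

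I do not expect a genuine obstacle: the argument is essentially bookkeeping with the inverse-semigroupoid identities collected before the statement — that $s = ss^*t$, that $ss^*$ is idempotent, that $(ss^*,t)$ is composable, that $(ss^*t)^* = s^*$, and that $s \le t$ iff $s^* \le t^*$ — together with the three defining axioms \ref{p1}--\ref{p3} of a partial action. The only subtlety worth flagging is that one should read \ref{p3} carefully enough to notice that it already forces $\theta_t(x) \in X_{ss^*}$, which is precisely what makes the ``identity map'' step legitimate rather than circular. (Alternatively, one could handle the agreement of the maps via the identity $s = t\,s^*s$ of item (i) and \ref{p3} applied to $(t, s^*s)$, but the route above has the advantage of reusing the single pair $(ss^*,t)$.)
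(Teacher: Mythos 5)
Your proposal is correct and follows essentially the same route as the paper: it uses $s=ss^*t$, the composable pair $(ss^*,t)$, Proposition \ref{ran_composition} together with \ref{p1} and \ref{p2} for the inclusion $X_s\subseteq X_t$, and then $s^*\leq t^*$ plus \ref{p3} and \ref{p1} to see that $\theta_s(x)=\theta_{ss^*}\big(\theta_t(x)\big)=\theta_t(x)$ on $X_{s^*}$. The subtlety you flag (that \ref{p3} already places $\theta_t(x)$ in $X_{ss^*}$, legitimizing the identity-map step) is exactly the point the paper's proof relies on implicitly.
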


\begin{proof}
Let $s,t \in \Se$ such that $s\leq t$. Since $s\leq t$, we have that $s=ss^*t$. Let $x \in X_s$. First, we shall prove $X_s \subseteq X_t$. By \ref{p2}, $x \in X_{ss^*}$. Hence, by \ref{ran_composition} and \ref{p1}, we have
$$x \in X_{ss^*}\cap X_s=X_{ss^*}\cap X_{ss^*t} \overset{\ref{ran_composition}}{=} \theta_{ss^*}(X_{ss^*}\cap X_t) \overset{\ref{p1}}{=} X_{ss^*}\cap X_t \subseteq X_t.$$
For the extension, we also have $s^*\leq t^*$ and, analogously to the previous argument, we have $X_{s^*}\subseteq X_{t^*}$. Then, for every $x \in X_{s^*}$, by \ref{p1} and \ref{p3}, we have that $x$ lies in the domain $X_{s^*}\cap X_{t^*}$ of $\theta_{ss^*}\circ\theta_t$ and
$$\theta_s(x)\overset{\ref{p3}}{=}\theta_{ss^*}\big(\theta_t(x)\big)\overset{\ref{p1}}{=}\theta_t(x).$$

Hence, $\theta_t$ is an extension of $\theta_s$ as stated.
\end{proof}

If $e,f$ are composable idempotent elements, the next result shows that $\theta_e\circ\theta_f$ and $\theta_{ef}$ share domains and, hence, coincide.

\begin{proposition}\label{ef_domain}
Let $\Se$ be an inverse semigroupoid and $(X,\theta)$ an $\Se$-set. If $e,f\in E(\Se)$ and $(e,f)\in\Se^{(2)}$, then $X_{ef}=X_e \cap X_f$ and $\theta_{ef}=\theta_e\circ\theta_f$.
\end{proposition}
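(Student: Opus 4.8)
The plan is to prove the two claims separately, using $e = ee^* = e^*e$ and $f = ff^* = f^*f$ (since idempotents are their own inverses) throughout to unlock the earlier propositions.

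For the domain equality $X_{ef} = X_e \cap X_f$, I would argue by mutual inclusion. For ``$\subseteq$'': since $(e,f) \in \Se^{(2)}$ and idempotents commute, $ef \in E(\Se)$ with $ef \leq e$ and $ef \leq f$ (this is recorded right after the order axioms in the excerpt); then \Cref{extension} applied to $ef \leq e$ and to $ef \leq f$ gives $X_{ef} \subseteq X_e$ and $X_{ef} \subseteq X_f$, hence $X_{ef} \subseteq X_e \cap X_f$. For the reverse inclusion I would invoke \Cref{ran_composition} with the composable pair $(e,f)$: it yields $\theta_e(X_{e^*} \cap X_f) = X_e \cap X_{ef}$, and since $e \in E(\Se)$ we have $e^* = e$ and $\theta_e = \id_{X_e}$ by \ref{p1}, so the left side is just $X_e \cap X_f$. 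Therefore $X_e \cap X_f = X_e \cap X_{ef} \subseteq X_{ef}$, and combined with the first inclusion we get equality. (Notice $X_{ef} \subseteq X_e$ already makes $X_e \cap X_{ef} = X_{ef}$, so this is clean.)

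For the map equality $\theta_{ef} = \theta_e \circ \theta_f$: by \Cref{extension} applied to $ef \leq e$ we have $\theta_{ef} \subseteq \theta_e = \id_{X_e}$, so $\theta_{ef}$ is the identity on its domain $X_{(ef)^*} = X_{ef}$ (using $(ef)^* = f^*e^* = fe = ef$ by commutativity of idempotents). On the other hand, $\theta_e \circ \theta_f$ has domain $\theta_f^{-1}(X_f \cap X_{e^*}) = \theta_f^{-1}(X_f \cap X_e)$; since $\theta_f = \id_{X_f}$ by \ref{p1}, this domain is exactly $X_f \cap X_e = X_{ef}$ by the first part, and on that domain $\theta_e \circ \theta_f$ is the composite of two identity maps, hence the identity. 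Thus $\theta_{ef}$ and $\theta_e \circ \theta_f$ are both the identity map on the same set $X_{ef}$, so they coincide.

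I do not anticipate a serious obstacle here: the statement is essentially a bookkeeping consequence of \ref{p1}, \Cref{ran_composition}, and \Cref{extension}, once one observes that for idempotents $e^* = e$, $\theta_e = \id_{X_e}$, and $(ef)^* = ef$. The only point requiring minor care is making sure all the composable-pair hypotheses are in place when applying \Cref{ran_composition} and \Cref{extension} — in particular that $ef \leq e$ and $ef \leq f$ are legitimate (which follows from the inequality $ef \leq e, f$ for composable idempotents stated in the excerpt) and that $(e,f) \in \Se^{(2)}$ is exactly the standing hypothesis. Everything else is routine.
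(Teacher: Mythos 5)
Your proof is correct and follows essentially the same route as the paper: both rest on $ef\leq e$ (and $ef\leq f$) together with \Cref{extension}, \Cref{ran_composition} and \ref{p1} for the domain equality. The only cosmetic difference is that for $\theta_{ef}=\theta_e\circ\theta_f$ you explicitly identify both maps as the identity on $X_{ef}$, whereas the paper simply notes that the two maps share domains and invokes the extension $\theta_e\circ\theta_f\subseteq\theta_{ef}$.
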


\begin{proof}
Let $e,f \in E(\Se)$ such that $(e,f)\in S^{(2)}$. Since, $ef\leq e$, by \ref{extension}, we have $X_{ef}\subseteq X_e$. Hence, by \ref{p1} and \ref{ran_composition}, we have
$$X_{ef}=X_e \cap X_{ef} \overset{\ref{ran_composition}}{=} \theta_e(X_e \cap X_f) \overset{\ref{p1}}{=} X_e \cap X_f.$$
Since the domains (and codomains) of $\theta_{ef}$ and $\theta_e\circ\theta_f$ coincide, we must have $\theta_{ef}=\theta_e\circ\theta_f$, as desired.
\end{proof}

We now present one alternative for the definition of partial action. The reader is invited to compare the next proposition with Definition 1.7~\cite{Luiz2}.

\begin{proposition}\label{def_lui}
Let $X$ be a set and $\Se$ an inverse semigroupoid. Let $\{X_s\}_{s\in\Se}$ be a collection of subsets of $X$ and $\{\theta_s\}_{s\in\Se}$ a collection of maps $\theta_s\colon X_{s^*}\to X_s$. Then, the pair $\theta=(\{X_s\}_{s\in\Se}, \{\theta_s\}_{s\in\Se})$ is a partial action of $\Se$ on $X$ if, and only if, it satisfies
\begin{enumerate}[label=(E\arabic*)]
    \item\label{e1} For every $s\in\mathcal{S}$, $\theta_s$ is a bijection and $\theta_s^{-1}=\theta_{s^*}$. Moreover,
    $X=\cup_{s\in\Se} X_s$; 
    \item\label{e2} $\theta_s\circ\theta_t \subseteq \theta_{st}$ for every $s, t \in \Se$ such that $(s,t)\in\mathcal{S}^{(2)}$;
    \item\label{e3} $X_s \subseteq X_t$ for every $s,t \in \Se$ such that $s\leq t$.
\end{enumerate}
Furthermore, if $\theta$ is a partial action of $\Se$ on $X$, then it is a \emph{global action} if, and only if
\begin{enumerate}[label=(E\arabic*)]\setcounter{enumi}{3}
    \item\label{eglob} $\theta_{st}=\theta_s\circ\theta_t$ for every $s, t \in \Se$ such that $(s,t)\in\mathcal{S}^{(2)}$ (i.e. equality holds in \ref{e2}).
\end{enumerate}
\end{proposition}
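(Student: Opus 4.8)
The plan is to prove the two directions of the equivalence separately, together with the characterization of global actions at the end. For the forward direction, assume $\theta = (\{X_s\}, \{\theta_s\})$ is a partial action in the sense of Definition \ref{our_def}. Then \ref{e1} is exactly Proposition \ref{inverse} combined with Remark \ref{obs1} (the union being all of $X$ follows from \ref{p1} since $E(\Se) \subseteq \Se$); \ref{e2} is precisely the reformulation of \ref{p3} spelled out in Remark \ref{obs1}, namely $\theta_s \circ \theta_t \subseteq \theta_{st}$; and \ref{e3} is the containment half of Proposition \ref{extension}. So the forward direction is essentially a bookkeeping exercise citing results already proved.

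For the converse, suppose $\theta$ satisfies \ref{e1}--\ref{e3}; I must recover \ref{p1}, \ref{p2}, \ref{p3}. First I would establish \ref{p1}: for $e \in E(\Se)$, apply \ref{e2} to the composable pair $(e,e)$ to get $\theta_e \circ \theta_e \subseteq \theta_{ee} = \theta_e$, and since by \ref{e1} each $\theta_e$ is a bijection $X_{e^*} = X_e \to X_e$ (using $e^* = e$), a bijection extending its own square on a subdomain must be the identity on $X_e$; the covering condition $\bigcup_{e} X_e = X$ is not literally part of \ref{e1}, so here I would need to argue that for any $x \in X$ there is $s$ with $x \in X_s$, hence $x \in X_{ss^*}$ after using \ref{e3} with $s \le ss^*$... wait, that is the wrong direction. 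Actually $X_s \subseteq X_{ss^*}$ requires \ref{e3} applied to $s \le ss^*$, but $s \le ss^*$ is false in general; instead one uses that $x \in X_s$ and, via \ref{e2} applied to $(ss^*, s)$ giving $\theta_{ss^*}\circ\theta_s \subseteq \theta_{ss^*s} = \theta_s$, deduces $X_s \subseteq X_{(ss^*)} $ by a domain computation — this is exactly \ref{p2}, so I would prove \ref{p2} first and then note $x \in X_s \subseteq X_{ss^*}$ with $ss^* \in E(\Se)$, giving the covering clause of \ref{p1}.

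So the real order is: (a) derive \ref{p2} from \ref{e1}, \ref{e2} by the domain argument just sketched; (b) derive the covering clause of \ref{p1} from \ref{e1} and \ref{p2}; (c) derive $\theta_e = \id_{X_e}$ from \ref{e2} applied to $(e,e)$ and bijectivity in \ref{e1}; (d) derive \ref{p3} from \ref{e2}: the inclusion $\theta_s\circ\theta_t \subseteq \theta_{st}$ gives that $\dom(\theta_s \circ \theta_t) = \theta_t^{-1}(X_t \cap X_{s^*})$ is contained in $X_{(st)^*}$, and applying the same to the pair $(t^*, s^*)$ and taking inverses (using \ref{e1}) gives the reverse inclusion and the composition identity; the equality of domains forces the map equation $\theta_s(\theta_t(x)) = \theta_{st}(x)$ on $X_{(st)^*} \cap X_{t^*}$. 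Finally, for the global-action clause, I would show \ref{pglob} $\Leftrightarrow$ \ref{eglob}: assuming \ref{pglob}, the domain containments in \ref{p3} become equalities because $X_{s^*} = X_{s^*s}$ collapses the constraints, yielding $\dom(\theta_s\circ\theta_t) = X_{(st)^*}$ and hence $\theta_s\circ\theta_t = \theta_{st}$; conversely, given \ref{eglob}, apply it to $(s, s^*)$ to get $\theta_s \circ \theta_{s^*} = \theta_{ss^*}$ as genuine equality of maps, and a short computation of domains (the domain of $\theta_s\circ\theta_{s^*}$ is $X_s$, that of $\theta_{ss^*}$ is $X_{ss^*}$) forces $X_s = X_{ss^*}$.

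**Main obstacle.** I expect the delicate point to be the domain/codomain bookkeeping in step (d) and in the global-action equivalence: one must be careful that "$\theta_s \circ \theta_t \subseteq \theta_{st}$" under the paper's convention already pins down $\dom(\theta_s\circ\theta_t) = \theta_t^{-1}(X_t \cap X_{s^*})$, and then proving \ref{p3} amounts to showing this set equals $X_{(st)^*} \cap X_{t^*}$ — the inclusion "$\subseteq$" is immediate from the extension statement, but the reverse inclusion genuinely needs the instance of \ref{e2} for the transposed pair $(t^*,s^*)$ together with Proposition-\ref{inverse}-style inversion of bijections, and one has to verify these manipulations are legitimate given only \ref{e1}--\ref{e3}. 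Everything else is routine once the conventions from Remark \ref{obs1} are invoked consistently.
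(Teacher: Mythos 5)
Your overall architecture (forward direction by citing Proposition \ref{inverse}, Remark \ref{obs1} and Proposition \ref{extension}; converse by recovering \ref{p2}, then the covering clause of \ref{p1}, then $\theta_e=\id_{X_e}$, then \ref{p3}; global clause via \ref{pglob}$\Leftrightarrow$\ref{eglob}) matches the paper, but two concrete steps in the converse would fail as written. First, the minor one: to get \ref{p2} you invoke \ref{e2} for the pair $(ss^*,s)$, giving $\theta_{ss^*}\circ\theta_s\subseteq\theta_s$; under the paper's convention the domain of that composite is $\theta_s^{-1}(X_s\cap X_{ss^*})$, which is contained in $X_{s^*}$ trivially, so this instance yields nothing. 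The instance that works is $(s,s^*)$ (as in the paper): by \ref{e1}, $\theta_s\circ\theta_{s^*}=\id_{X_s}$ with full domain $X_s$, and $\id_{X_s}\subseteq\theta_{ss^*}$ then forces $X_s\subseteq X_{ss^*}$.

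The serious gap is in your step (d). For the reverse inclusion $X_{(st)^*}\cap X_{t^*}\subseteq\theta_t^{-1}(X_t\cap X_{s^*})$ you propose using \ref{e2} for the transposed pair $(t^*,s^*)$ and taking inverses; that instance only gives $\theta_s(X_{s^*}\cap X_t)\subseteq X_{st}\cap X_s$ (the codomain statement, the analogue of Proposition \ref{ran_composition}), which says nothing about points of $X_{(st)^*}\cap X_{t^*}$. Moreover, no argument using only \ref{e1} and \ref{e2} can work, because your recipe never uses \ref{e3}, and \ref{e3} is indispensable here: take $\Se=\{e,f\}$ the two-element semilattice with $ef=f$ (so $f\leq e$), $X=\{1\}$, $X_e=\emptyset$, $X_f=\{1\}$, $\theta_e$ the empty map and $\theta_f=\id$; then \ref{e1} and \ref{e2} hold (all relevant composites have empty domain) but \ref{p3} fails for $(e,f)$, since $\theta_f^{-1}(X_f\cap X_e)=\emptyset\neq\{1\}=X_{(ef)^*}\cap X_f$. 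The missing idea is the paper's: apply \ref{e2} to the pair $(st,t^*)$ to get $\theta_{t^*}^{-1}(X_{t^*}\cap X_{(st)^*})\subseteq X_{(stt^*)^*}\cap X_t$, and then use \ref{e3} with $(stt^*)^*=tt^*s^*\leq s^*$ to replace $X_{(stt^*)^*}$ by $X_{s^*}$, before inverting with \ref{e1}. With these two corrections the rest of your outline (covering clause, $\theta_e=\id_{X_e}$ via injectivity, and both directions of the global-action equivalence) goes through essentially as in the paper.
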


\begin{proof}
If $\theta$ is a partial action, the first part of \ref{e1} follows from Proposition \ref{inverse} and the second part follows immediately from the second part of \ref{p1}. From the comments after Definition \ref{our_def} we deduce \ref{e2}, and \ref{e3} is a direct consequence of Proposition \ref{extension}.

Assume now that $\theta$ satisfies \ref{e1}, \ref{e2} and \ref{e3}. Our goal is to prove that $\theta$ is a partial action of $\Se$ on $X$.

By \ref{e1} and \ref{e2} we deduce \ref{p2} since
$$\id_{X_s} \overset{\ref{e1}}{=} \theta_s \circ \theta_{s^*} \overset{\ref{e2}}{\subseteq} \theta_{ss^*},$$
and, hence, the domain $X_s$ of $\id_{X_s}$ is contained in the domain $X_{ss^*}$ of $\theta_{ss^*}$. This also enables us to infer the second part of \ref{p1} since, by \ref{e1}, for any $x \in X$, there exists $s \in \Se$ such that $x \in X_s$. Hence, $x \in X_s \subseteq X_{ss^*}$ and $ss^*$ is an idempotent.

For the first part of \ref{p1} we use again \ref{e1} and \ref{e2}. In one hand we have 
$$\id_{X_e} \overset{\ref{e1}}{=} \theta_e\circ\theta_e \overset{\ref{e2}}{\subseteq} \theta_e$$
and, since $\id_{X_e}$ and $\theta_e$ share domains, we must have $\theta_e=\id_{X_e}$.

It remains to argue \ref{p3}. For that task, let $(s,t)\in \Se^{(2)}$. By \ref{e2}, $\theta_s\circ\theta_t\subseteq\theta_{st}$ and, hence, the domain $\theta_t^{-1}(X_t \cap X_{s^*})$ of $\theta_s\circ\theta_t$ is contained in the domain $X_{(st)^*}$ of $\theta_{st}$. Combining this with the fact that the domain of $\theta_t$ is $X_{t^*}$ we obtain
\begin{equation}\label{def_lui_1}
\theta_t^{-1}(X_t \cap X_{s^*}) \subseteq X_{(st)^*} \cap X_{t^*}.
\end{equation}
To deduce the reverse inclusion, notice that we must also have $(st,t^*)\in\Se^{(2)}$ and using \ref{e2} and a similar argument we obtain
\begin{equation}\label{def_lui_2}
\theta_{t^*}^{-1}(X_{t^*} \cap X_{(st)^*}) \subseteq X_{(stt^*)^*} \cap X_t \overset{\ref{e3}}{\subseteq} X_{s^*} \cap X_t,
\end{equation}
in which the last inclusion follows from \ref{e3}, since $(stt^*)^*=tt^*s^*\leq s^*$. Combining the fact that $X_{s^*} \cap X_t$ is contained in the domain of $\theta_{t^*}$ and \ref{e1}, we obtain from (\ref{def_lui_2}) that
\begin{equation}\label{def_lui_3}
X_{t^*} \cap X_{(st)^*} \subseteq \theta_{t}^{-1}(X_{s^*}\cap X_t)
.\end{equation}

Joining (\ref{def_lui_1}) and (\ref{def_lui_3}) we obtain the desired equality.
Finally, if $x \in X_{(st)^*}\cap X_{t^*}$, then $x$ lies in the domain $\theta_t^{-1}(X_t \cap X_{s^*})$ of $\theta_s\circ\theta_t$ and, hence, by \ref{e2}, we have that
$$\theta_s\big(\theta_t(x)\big)=\theta_{st}(x),$$
completing the verification of \ref{p3}.

Suppose now that $\theta$ is a global action of $\Se$ on $X$. Hence, by \ref{pglob} and Proposition \ref{ef_domain}, we have
$$X_{(st)^*}\cap X_{t^*} \overset{\ref{pglob}}{=} X_{(st)^*(st)}\cap X_{t^*t} \overset{\ref{ef_domain}}{=} X_{(st)^*(st)t^*t}=X_{(st)^*(st)}=X_{(st)^*},$$
which means that the domain of $\theta_s\circ\theta_t$ coincides with the domain of $\theta_{st}$ and, hence, concluding \ref{eglob}.

Conversely, if $\theta$ is partial action that satisfies \ref{eglob}, then for any $s \in \Se$ we would have $\theta_{ss^*}=\theta_s\circ\theta_{s^*}=\id_{X_s}$ and, consequently, $X_{ss^*}=X_s$ concluding \ref{pglob}.
\end{proof}

We end this section with two examples of partial actions that we shall explore again in the next section.

\begin{example}\label{ex_1_pt2}
Let $X=\{1,2,3,4\}$ and let $\Se=\{a,a^*,b,b^*,a^*a,aa^*,b^*b,bb^*\}$ be the inverse semigroupoid presented in Example \ref{ex_pt1}. Consider the following family of subsets of $X$:
\begin{multicols}{4}
$X_{b^*}=\{1,2\}$,

$X_{b^*b}=\{1,2\}$,

$X_b=\{1,4\}$,

$X_{bb^*}=\{1,4\}$,

$X_{a^*}=\{1\}$

$X_{a^*a}= \{1\}$,

$X_a=\{3\}$,

$X_{aa^*}=\{3,4\}$,
\end{multicols}
and the following family of bijections:
$$\begin{array}{cccc}
\theta_b \colon & X_{b^*} & \rightarrow & X_b \\
& 1 & \mapsto & 1\\
& 2 & \mapsto & 4
\end{array},
\qquad
\begin{array}{cccc}
\theta_{a}\colon & X_{a^*} & \rightarrow & X_a \\
& 1 & \mapsto & 4\\
& & &
\end{array},$$
$\theta_{a^*}=\theta_a^{-1}$, $\theta_{b^*}=\theta_b^{-1}$ and $\theta_{e}=\id_{X_e}$ for $e \in E(\Se)=\{b^*b, bb^*,a^*a, aa^*\}$. It is routine to check that $\theta=(\{X_s\}_{s\in\Se}, \{\theta_s\}_{s\in\Se})$ is, indeed, a partial action of $\Se$ on $X$.
\end{example}

\begin{example}\label{ex_2_pt2}
Let $Y=\{1,2,3\}$ and let $\Se=\{a,a^*,b,b^*,a^*a,aa^*,b^*b,bb^*\}$ be the inverse semigroupoid presented in Example \ref{ex_pt1}. Consider the following family of bijections:
$$\begin{array}{cccc}
\theta^Y_a \colon & Y & \rightarrow & Y \\
& 1 & \mapsto & 2 \\
& 2 & \mapsto & 3 \\
& 3 & \mapsto & 1
\end{array},$$
$\theta^Y_{a^*}={\theta^Y_a}^{-1}$ and $\theta^Y_s=\id_Y$ if $s \neq a,a^*$. Setting $Y_s=Y$ for every $s \in \Se$, it is routine to check that $\theta^Y=(\{Y_s\}_{s\in\Se}, \{\theta^Y_s\}_{s\in\Se})$ is a global action of $\Se$ on $Y$.
\end{example}

\section{Restriction and Globalization of partial actions}

It is well known that we can produce interesting examples of partial actions via a process of restriction of global actions. Let us explain this process in the present context: suppose $\theta=(\{Y_s\}_{s\in\Se}, \{\theta_s\}_{s\in\Se})$ is a partial action of an inverse semigroupoid $\Se$ on a set $Y$. If $X$ is a subset of $Y$, we may try to restrict the action of $\Se$ to $X$ by restricting the domain of each $\theta_s$ to $X \cap Y_{s^*}$. The main problem in this process is that, even though $x \in X\cap Y_{s^*}$, we may have $\theta_s(x) \notin X$.

However, we can turn around this problem by restricting each $\theta_s$ to the set formed by all $x \in X\cap Y_{s^*}$ such that $\theta_s(x)$ also lies in $X$. Explicitly, setting $X_s=\theta_s(X \cap Y_{s^*})\cap X$ and noticing that $\theta_s(X_{s^*})=X_s$, we may let
$${(\restr{\theta}{X})}_s\colon X_{s^*}\to X_s$$
to be the restriction of $\theta_s$ to $X_{s^*}$ onto $X_s$. It is then easy to verify that
$$\restr{\theta}{X}=(\{X_s\}_{s\in\Se}, \{ {(\restr{\theta}{X})}_s\}_{s\in\Se})$$
satisfies the axioms of Definition \ref{our_def} and, hence, is a partial action of $\Se$ on $X$ which, from now on, we shall call the \emph{restriction} of the action $\theta$ to $X$.

Let us point out that, even if the original action $\theta$ of $\Se$ on $Y$ is global, there is no guarantee that the restriction $\restr{\theta}{X}$ is a global action. Indeed, for any $x\in X\cap Y_{s^*}$ such that $\theta_s(x)$ does not lie in $X$, we would have $x \in X_{s^*s}$, but $x \notin X_{s^*}$.

The observation in the last paragraph raises an interesting problem: if we start with a partial action $\theta$ of an inverse semigroupoid $\Se$ on a set $X$, does there exist a set $Y$ containing $X$ and a global action $\eta$ of $\Se$ on $Y$ such that $\theta$ may be obtained by the restriction of the action of $\eta$ to $X$? The main goal of this work is to give an affirmative answer for this question.

Before we proceed, let us introduce the appropriate morphisms to form the category of partial actions. 

\begin{definition}
Let $\Se$ be an inverse semigroupoid and let $\theta^X=(\{X_s\}_{s\in\Se}, \{\theta_s^X\}_{s\in\Se})$ and $\theta^Y=(\{Y_s\}_{s\in\Se}, \{\theta_s^Y\}_{s\in\Se})$ be partial actions of $\Se$ on sets $X$ and $Y$, respectively. A function $\varphi\colon X\rightarrow Y$ is said to be an $\Se$\emph{-function} if $\varphi(X_s)\subseteq Y_s$ for every $s\in\Se$ and, moreover, $\varphi\big(\theta_s^X(x)\big)=\theta_s^Y\big(\varphi(x)\big)$ for every $x \in X_{s^*}$.
\end{definition}

Given an inverse semigroupoid $\Se$, the class of $\Se$-sets together with the class of $\Se$-functions form a category, which we denote by $\mathscr{A}_p(\Se)$. Likewise, the class of global $\Se$-sets together with the class of $\Se$-functions between then form a full subcategory of $\mathscr{A}_p(\Se)$, which we denote by $\mathscr{A}(\Se)$.

For example, if $(Y,\theta)$ is an $\Se$-set and $X\subseteq Y$, it is easy to verify that the inclusion map $i\colon X \to Y$ is an $\Se$-function between the $\Se$-sets $(X,\restr{\theta}{X})$ and $(Y,\theta)$. Actually, $i$ is even more than an injective $\Se$-function, it is an embedding in $\mathscr{A}_p(\Se)$ as follows.

\begin{definition}\label{embedding}
Let $\Se$ be an inverse semigroupoid and let $\theta^X=(\{X_s\}_{s\in\Se}, \{\theta_s^X\}_{s\in\Se})$ and $\theta^Y=(\{Y_s\}_{s\in\Se}, \{\theta_s^Y\}_{s\in\Se})$ be partial actions of $\Se$ on sets $X$ and $Y$, respectively. An injective $\Se$-function $\varphi\colon X\rightarrow Y$ is said to be an \emph{embedding} if
$$X_s=\varphi^{-1}\big(\theta_s^Y(\varphi(X)\cap Y_{s^*})\big)$$
for every $s \in \Se$.
\end{definition}

Notice that, if $\varphi\colon X\rightarrow Y$ is an embedding as in the Definition above, then the action $\theta^Y$ of $\Se$ on $Y$ induces the action $\theta^X$ of $\Se$ on $X$ in the sense that, for given $x_1,x_2 \in X$ and $s \in \Se$ such that $\varphi(x_1) \in Y_{s^*}$ and $\theta_s(\varphi(x_1))=\varphi(x_2)$, then $x_1 \in X_{s^*}$ and $\theta_s(x_1)=x_2$.

Actually, in order to an injective function $\varphi$ as in Definition \ref{embedding} be an embedding is necessary, and sufficient that, for given $x \in X$ and $s \in \Se$, we have that $x \in X_{s^*}$ if, and only if, $\varphi(x)\in Y_{s^*}$ and $\theta^Y_s(\varphi(x)) \in \varphi(X)$ and, in the affirmative case, $\theta^Y_s(\varphi(x))=\varphi(\theta_s^X(x))$. This amounts to say that $\varphi$ co-restricts to an isomorphism $\varphi\colon X \to \varphi(X)$ between $(X,\theta^X)$ and $(\varphi(X),\restr{\theta^Y}{\varphi(X)})$ in $\mathscr{A}_p(\Se)$.

This discussion gives the appropriate definition for globalizations of partial actions.

\begin{definition}\label{globalization}
Let $(X,\theta^X)$ and $(Y,\theta^Y)$ be $\Se$-sets and $\varphi \colon X\to Y$ a $\Se$-function. The triple $(\varphi,Y,\theta^Y)$ is said to be 
\begin{enumerate}[label=(\roman*)]
\item a \emph{globalization} of $(X,\theta^X)$ if $\varphi$ is an embedding and $(Y,\theta^Y)$ is a global $\Se$-set.
\item an \emph{universal globalization} of $(X,\theta^X)$ if it is a globalization of $(X,\theta^X)$ and, for any globalization $(\psi,Z,\theta^Z)$ of $(X,\theta^X)$, there exists a unique \emph{mediating} $\Se$-function $\sigma\colon Y\to Z$ such that the diagram
$$\xymatrix{ X \ar[rr]^{\varphi} \ar[rd]_{\psi}& & Y \ar@{..>}[ld]^{\sigma}\\ & Z & }$$
commutes. 
\end{enumerate}
\end{definition}

In the context of Definition \ref{globalization} above, if the $\Se$-function $\varphi$ is implicit in the context, we may say simply that $(Y,\theta^Y)$ is a globalization of $(X,\theta^X)$.

We shall prove that every partial action admits a universal globalization. Moreover, we shall prove that our construction gives a reflector from $\mathscr{A}_p(\Se)$ on $\mathscr{A}(\Se)$ (Theorem \ref{o_teorema}), whose definition we recall below.

\begin{definition}\label{reflector}
A full subcategory $\mathcal{D}$ of a category $\mathcal{C}$ is said to be \emph{reflective} if, for any $\mathcal{C}$-object $C$, there exists a $\mathcal{D}$-object $R(C)$ and a $\mathcal{C}$-morphism $\varphi_C\colon C \to R(C)$ such that, for every $\mathcal{D}$-object $D$ and $\mathcal{C}$-morphism $\psi\colon C \to D$, there exists a unique mediating $\mathcal{D}$-morphism $\sigma\colon R(C) \to D$ such that the diagram
$$\xymatrix{C \ar[rr]^{\varphi_C} \ar[rd]_{\psi}& & R(C) \ar@{..>}[ld]^{\sigma}\\ & D & }$$
commutes. Moreover, in this case, the pair $(\varphi_C,R(C))$ is said to be a $\mathcal{D}$-\emph{reflector} of $C$.
\end{definition}

In the context of Definition \ref{reflector} above, if the morphism $\varphi_C$ is implicit in the context, we may say simply that $R(C)$ is a $\mathcal{D}$-reflector of $C$.

If $\mathcal{D}$ is a reflective subcategory of a category $\mathcal{C}$, then the map $C \mapsto R(C)$ induces a \emph{reflector} functor $R\colon \mathcal{C} \to \mathcal{D}$ which is right adjoint to the inclusion functor $\mathcal{D} \to \mathcal{C}$.

\

\textbf{From now on we fix an inverse semigroupoid \boldmath{$\Se$}, a set \boldmath{$X$} and a partial action
\boldmath{$$\theta=(\{X_s\}_{s\in\Se},\{\theta_s\}_{s\in\Se}).$$}}

\

Our goal now is to construct a global $\Se$-set $(E,\eta)$ and an $\Se$-function $i\colon X \to E$ such that $(i,E,\eta)$ is a globalization of $(X,\theta)$ which is a $\mathscr{A}(\Se)$-reflector of $(X,\theta)$. Notice that, a globalization of $(X,\theta)$ which is also a $\mathscr{A}(\Se)$-reflector of $(X,\theta)$ is, automatically, a universal globalization of $(X,\theta)$. 

For reference, our construction is mainly inspired in \cite{Patrik}. We begin defining a relation on a subset of $\Se\times X$ as follows.

\begin{definition}\label{relation}
   Let $D:=\{(s,x)\in \Se\times X : x \in X_{s^*s}\}$. Define a relation $\sim$ on $D$ such that, for all $(s,x),(t,y)\in D$, $(s,x)\sim (t,y)$ if either
\begin{enumerate}[label=(R\arabic*)]
    \item\label{r1} $(t^*,s)\in\Se^{(2)}$, $x \in X_{s^*t}$ and $\theta_{t^*s}(x)=y$;
    \item\label{r2} or $s, t \in E(\Se)$ and $x=y$.
\end{enumerate}
\end{definition}

Now, we point some observations about the relation $\sim$ defined above. 

\begin{remark}\label{obs2}
First, in the case in which item \ref{r2} is checked, we have $x\in X_s$ and $y\in X_t$ and $\theta_s(x)=x=y=\theta_t(y)$, by \ref{p1}.

Still in the case in which item \ref{r2} is checked, if $s$ and $t$ are composable, then by Proposition \ref{ef_domain}, $X_{st}=X_s\cap X_t$ and, hence, $(t,s)\in\Se^{(2)}$, $x \in X_{st}$ and $\theta_{st}(x)=x$ by \ref{p1}. This means that $(s,x)\sim (t,y)$ is also checked by item \ref{r1}. In other words, item \ref{r2} in Definition \ref{relation} plays a relevant role only in the case $s$ and $t$ are idempotent elements over different objects.
\end{remark}

The set $E$ that we are looking for is going to be the quotient of $D$ by an equivalence relation which identifies pairs satisfying \ref{r1} or \ref{r2}. Unfortunately, the relation $\sim$ in Definition \ref{relation} may fail to be an equivalence relation. Indeed, in Example \ref{ex_1_pt2}, we may notice that $(a,1)\sim(aa^*,4)$, $(aa^*,4)\sim(bb^*,4)$ but $(a,1)\nsim(bb^*,4)$. At least, $\sim$ is reflexive and symmetric.  

\begin{proposition}\label{quasi_equivalence}
The relation $\sim$ defined in \ref{relation} is reflexive and symmetric. 
\end{proposition}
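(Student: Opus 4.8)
The plan is to verify the two properties directly from Definition \ref{relation}, invoking the basic facts about inverse semigroupoids and the partial-action axioms collected earlier.

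For reflexivity, fix $(s,x)\in D$, so $x\in X_{s^*s}$. I want $(s,x)\sim(s,x)$ via \ref{r1} with $t=s$. Note $(s^*,s)\in\Se^{(2)}$ since $s^*s$ is defined, so the composability condition $(t^*,s)=(s^*,s)\in\Se^{(2)}$ holds. The element $t^*s=s^*s$ is idempotent, and $x\in X_{s^*s}=X_{s^*t}$ by hypothesis, so $x$ lies in the required domain; finally $\theta_{t^*s}(x)=\theta_{s^*s}(x)=x$ by \ref{p1}, since $s^*s\in E(\Se)$. Hence \ref{r1} is satisfied with $y=x$, giving $(s,x)\sim(s,x)$.

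For symmetry, suppose $(s,x)\sim(t,y)$ with $(s,x),(t,y)\in D$. If this holds via \ref{r2}, then $s,t\in E(\Se)$ and $x=y$, and the same condition immediately gives $(t,y)\sim(s,x)$. If it holds via \ref{r1}, then $(t^*,s)\in\Se^{(2)}$, $x\in X_{s^*t}$ and $\theta_{t^*s}(x)=y$. I must produce \ref{r1} in the other direction: I need $(s^*,t)\in\Se^{(2)}$, $y\in X_{t^*s}$ and $\theta_{s^*t}(y)=x$. Since $(t^*,s)\in\Se^{(2)}$, taking inverses gives $(s^*,t)\in\Se^{(2)}$ and $(t^*s)^*=s^*t$. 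By Proposition \ref{inverse}, $\theta_{t^*s}$ is a bijection from $X_{(t^*s)^*}=X_{s^*t}$ onto $X_{t^*s}$ with inverse $\theta_{s^*t}$; hence from $\theta_{t^*s}(x)=y$ we get $y\in X_{t^*s}$ and $\theta_{s^*t}(y)=x$. This is exactly \ref{r1} for the pair $(t,y)\sim(s,x)$.

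I do not anticipate a genuine obstacle here: the statement is deliberately modest (the paper has just warned that $\sim$ fails transitivity, so only these two weaker properties are claimed), and everything reduces to the idempotent identity \ref{p1} for reflexivity and to Proposition \ref{inverse} plus the symmetry of the composability relation for symmetry. The only point requiring a little care is keeping track of which element is the "$t$" in the definition when reading \ref{r1} backwards, and noting $(t^*s)^* = s^*t$; both are routine given the inverse rules recalled after Definition \ref{def_inv_semigroupoid}.
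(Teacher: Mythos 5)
Your proof is correct and takes essentially the same route as the paper's: reflexivity is obtained via \ref{r1} with $t=s$ using \ref{p1}, and symmetry by swapping the roles of $(s,x)$ and $(t,y)$ in \ref{r1}. Your explicit appeal to Proposition \ref{inverse} (that $\theta_{t^*s}$ is a bijection with inverse $\theta_{s^*t}$) merely spells out a step the paper's proof asserts without comment.
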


\begin{proof}
Let $(s,x)\in D$. Since $x \in X_{s^*s}$ and $s^*s \in E(\Se)$, we have $\theta_{s^*s}(x)=x$ by \ref{p1}. This means that $(s,x)\sim (s,x)$ by item \ref{r1} of Definition \ref{relation}. Hence, $\sim$ is indeed reflexive.

It remains to show that $\sim$ is symmetric. Let $(s,x), (t,y) \in D$ and suppose that $(s,x)\sim (t,y)$. If $x=y$ and $s,t\in E(\Se)$ it is clear that $(t,y)\sim (s,x)$. Otherwise, assume that $(t^*,s)\in\Se^{(2)}$, $x \in X_{s^*t}$ and $\theta_{t^*s}(x)=y$. Then, we must have $(s^*,t)\in\Se^{(2)}$, $y \in X_{t^*s}$ and $\theta_{s^*t}(y)=x$, which means that $(t,y)\sim (s,x)$ by \ref{r2}.
\end{proof}

Since $\sim$ may not be an equivalence relation, We turn around this problem considering the equivalence relation generated by $\sim$, as follows.

\begin{definition}\label{releq}
Let $\approx$ be the smallest equivalence relation on $D$ containing the relation $\sim$ defined in \ref{relation}. Explicitly, if $(s,x), (t,y) \in D$, then $(s,x)\approx (t,y)$ if, and only if, there exist $n\in \mathbb{N}$ and $(r_1,z_1),\ldots,(r_n,z_n)\in D$ such that $(r_1,z_1)=(s,x)$, $(r_n,z_n)=(t,y)$ and $(r_i,z_i)\sim (r_{i+1},z_{i+1})$ for all $i\in\{ 1,\ldots,n-1\}$. We denote by $E$ the quotient of $D$ by the equivalence relation $\approx$. The equivalence class of an element $(s,x)\in D$ is denoted by $[s,x]$.
\end{definition}

The next step is to define an action of $\Se$ on $E$. The idea here is that, if an element $p \in \Se$ acts in the class of an element $(s,x) \in D$ such that $(p,s)\in\Se^{(2)}$, we should obtain the class of the element $(ps,x)$. However, in order to $(ps,x)$ be an element of $D$ we must have $x \in X_{(ps)^*(ps)}$. So, we now prove some results with the aiming to obtain a well defined action of $\Se$ on $E$ in this fashion.

\begin{lemma}\label{lemaind}
Let $(s,x)$, $(t,y)\in D$ be such that $(s,x)\sim(t,y)$ and let $p\in \Se$ be such that $(p,s),(p,t)\in \Se^{(2)}$. Then $x \in X_{s^*p^*ps}$ if, and only if, $y \in X_{t^*p^*pt}$. Moreover, when the memberships are verified, we have $(ps,x)\sim(pt,y)$.
\end{lemma}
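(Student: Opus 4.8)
The plan is to reduce at once to the case where $(s,x)\sim(t,y)$ is witnessed by \ref{r1}, and then to route everything through the single auxiliary element $w:=(pt)^*(ps)=t^*p^*ps$. For the reduction: if instead \ref{r2} holds, then $s,t\in E(\Se)$ and $x=y$; since $(p,s),(p,t)\in\Se^{(2)}$ force $c(s)=d(p)=c(t)$ while $s,t$ are idempotents, we get $d(s)=c(s)=d(p)=c(t)=d(t)$, so in particular $(s,t)\in\Se^{(2)}$ and Remark \ref{obs2} shows $(s,x)\sim(t,y)$ is also witnessed by \ref{r1}. From now on assume \ref{r1}: $(t^*,s)\in\Se^{(2)}$, $x\in X_{s^*t}$, $\theta_{t^*s}(x)=y$; note $y\in X_{t^*s}$ and, by Proposition \ref{inverse}, $\theta_{s^*t}(y)=x$. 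Put $w:=(pt)^*(ps)=t^*p^*ps$, so that $w^*=(ps)^*(pt)=s^*p^*pt$. From $(p^*p)s\le s$, $(p^*p)t\le t$ and compatibility of the canonical order with the multiplication (all from Section 2) we obtain $w\le t^*s$ and $w^*\le s^*t$, hence $\theta_w\subseteq\theta_{t^*s}$ and $\theta_{w^*}\subseteq\theta_{s^*t}$ by Proposition \ref{extension}.

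The crux is the identity $(t^*s)\big((ps)^*(ps)\big)=w$, which holds because $ss^*$ and $p^*p$ are commuting idempotents over the common object $c(s)=d(p)$ and $(ss^*)s=s$: indeed $(t^*s)(s^*p^*ps)=t^*(ss^*)(p^*p)s=t^*(p^*p)s=t^*p^*ps$. Feeding the composable pair $\big(t^*s,(ps)^*(ps)\big)$ into Proposition \ref{ran_composition} then gives $\theta_{t^*s}\big(X_{s^*t}\cap X_{(ps)^*(ps)}\big)=X_{t^*s}\cap X_w$. Since $\theta_{t^*s}$ is a bijection of $X_{s^*t}$ onto $X_{t^*s}$ carrying $x$ to $y$, with $x\in X_{s^*t}$ and $y\in X_{t^*s}$, we conclude $(ps,x)\in D\iff x\in X_{(ps)^*(ps)}\iff y\in X_w$; running the same argument with $(s,x)$ and $(t,y)$ interchanged (legitimate since $\sim$ is symmetric, Proposition \ref{quasi_equivalence}) gives $(pt,y)\in D\iff x\in X_{w^*}$. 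Finally $\theta_w\colon X_{w^*}\to X_w$ is a bijection, and since $\theta_w\subseteq\theta_{t^*s}$, $\theta_{w^*}\subseteq\theta_{s^*t}$ with $\theta_{t^*s}(x)=y$ and $\theta_{s^*t}(y)=x$, membership of $y$ in $X_w=\dom\theta_{w^*}$ forces $x=\theta_{w^*}(y)\in X_{w^*}$ and symmetrically, so $y\in X_w\iff x\in X_{w^*}$. Chaining these three equivalences proves the ``if and only if''.

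For the last assertion, when the memberships hold we have $(ps,x),(pt,y)\in D$ and $x\in X_{w^*}=X_{(ps)^*(pt)}$, so it remains to verify $(ps,x)\sim(pt,y)$ via \ref{r1}: $\big((pt)^*,ps\big)=\big(t^*p^*,ps\big)\in\Se^{(2)}$ because $d(t^*p^*)=c(p)=c(ps)$; $x\in X_{(ps)^*(pt)}$ was just noted; and $\theta_{(pt)^*(ps)}(x)=\theta_w(x)=\theta_{t^*s}(x)=y$ because $\theta_w\subseteq\theta_{t^*s}$ and $x\in X_{w^*}=\dom\theta_w$.

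The step I expect to be the main obstacle is spotting the right auxiliary element: once one decides to route everything through $w=(pt)^*(ps)$, the four idempotent domains implicit in the statement ($X_{s^*p^*ps}$, $X_{t^*p^*pt}$ and their ``cross'' counterparts $X_{s^*p^*pt}$, $X_{t^*p^*ps}$) collapse onto the two conditions $y\in X_w$ and $x\in X_{w^*}$, after which the proof is a short chain of equivalences; the supporting identity $(t^*s)(ps)^*(ps)=w$ and the order estimates $w\le t^*s$, $w^*\le s^*t$ are then routine idempotent bookkeeping, and disposing of case \ref{r2} at the outset is precisely what makes this uniform treatment possible.
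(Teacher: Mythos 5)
Your proof is correct and takes essentially the same route as the paper's: reduce case \ref{r2} to \ref{r1} via Remark \ref{obs2}, exploit the identity $(pt)^*(ps)=(t^*s)(s^*p^*ps)$ coming from the commuting idempotents $ss^*$ and $p^*p$, transfer the domain memberships between $x$ and $y$ with the partial-action machinery, and then verify \ref{r1} for $(ps,x)$ and $(pt,y)$. The only difference is bookkeeping: you chain equivalences through $w=(pt)^*(ps)$ using Propositions \ref{ran_composition} and \ref{extension}, whereas the paper composes $\theta_{t^*s}\circ\theta_{s^*p^*ps}\circ\theta_{s^*t}$ and invokes \ref{e2}--\ref{e3}, with \ref{p3} for the final step.
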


\begin{proof}
Notice, at first, that the membership $x\in X_{(ps)^*(ps)}=X_{s^*p^*ps}$ is necessary (and sufficient) to guarantee that $(ps,x)$ lies in $D$.

Let $(s,x)$, $(t,y)\in D$ be such that $(s,x)\sim(t,y)$ and let $p\in \Se$ be such that $(p,s),(p,t)\in \Se^{(2)}$. Notice that, if $s,t\in E(\Se)$, since $(p,s),(p,t)\in\Se^{(2)}$, we must have $(s,t),(t,s)\in\Se^{(2)}$. Hence, by the second paragraph of Remark \ref{obs1}, we may assume that $(s,x)\sim(t,y)$ is checked by item \ref{r1} of Definition \ref{relation}. Then, assume that $(t^*,s)\in\Se^{(2)}$, $x\in X_{s^*t}$ and $\theta_{t^*s}(x)=y$. By symmetry of $\sim$, we have $(s^*,t)\in\Se^{(2)}$, $y\in X_{t^*s}$ and $\theta_{s^*t}(y)=x$.

Assume now that $x \in X_{s^*p^*ps}$ and notice that $y$ must lie in the domain
$$\theta_{s^*t}^{-1}\big(X_{s^*t}\cap\theta_{s^*p^*ps}^{-1}(X_{s^*p^*ps}\cap X_{s^*t})\big)=\theta_{s^*t}^{-1}(X_{s^*t}\cap X_{s^*p^*ps})$$
of $\theta_{t^*s}\circ\theta_{s^*p^*ps}\circ\theta_{s^*t}$ since $x$ lies in both $X_{s^*t}$ and $X_{s^*p^*ps}$ and $\theta_{s^*t}(y)=x$. Now, observing that
$$(t^*s)(s^*p^*ps)(s^*t)=t^*ss^*p^*pss^*t=t^*ss^*p^*pt\leq t^*p^*pt$$
and using \ref{e2} and \ref{e3}, we have that $y\in X_{t^*ss^*p^*pt} \subseteq X_{t^*p^*pt}$ as desired.

Conversely, if $y \in X_{t^*p^*pt}$, just notice that
$$(s^*t)(t^*p^*pt)(t^*s)=s^*tt^*p^*ptt^*s=s^*tt^*p^*ps\leq s^*p^*ps$$
and use a similar argument to deduce that $x \in X_{s^*p^*ps}$.

Finally, we assume that $x \in X_{s^*p^*ps}$ and $y \in X_{t^*p^*pt}$ and prove that $(ps,x)\sim(pt,y)$. For that purpose, notice that $(t^*p^*,ps)\in\Se^{(2)}$ and that
$$(pt)^*(ps)=t^*p^*ps=t^*p^*pss^*s=t^*ss^*p^*ps=(t^*s)(s^*p^*ps),$$
from which we can deduce by \ref{p3} that
$$x \in X_{s^*t}\cap X_{s^*p^*ps}=\theta_{s^*p^*ps}^{-1}(X_{s^*t}\cap X_{s^*p*ps}) \overset{\ref{p3}}{=} X_{(t^*p^*ps)^*}\cap X_{s^*p^*ps} \subseteq X_{(ps)^*(pt)},$$
and, moreover, that
$$\theta_{(pt)^*(ps)}(x)=\theta_{t^*s}\big(\theta_{s^*p^*ps}(x)\big)=\theta_{t^*s}(x)=y.$$
Therefore, by item \ref{r1} of Definition \ref{relation}, $(ps,x)\sim(pt,y)$ as desired.
\end{proof}

We may also slightly modify Lemma \ref{lemaind} by changing the relation $\sim$ by the equivalence relation $\approx$ generated by $\sim$. But first, we will need an auxiliary result.

\begin{lemma}\label{lema_acao}
Let $(s,x)$, $(t,y)\in D$ such that $(s,x)\approx(t,y)$. Then $x\in X_{s^*}$ if, and only if, $y\in X_{t^*}$. Moreover, when the memberships are verified, we have $\theta_s(x)=\theta_t(y)$.
\end{lemma}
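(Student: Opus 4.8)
The plan is to reduce the statement for the equivalence relation $\approx$ to the corresponding statement for the generating relation $\sim$, and then to dispose of the two defining cases of $\sim$ separately.

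\textbf{Step 1: reduction to a single $\sim$-step.} By Definition \ref{releq}, a witness of $(s,x)\approx(t,y)$ is a finite chain $(s,x)=(r_1,z_1)\sim\cdots\sim(r_n,z_n)=(t,y)$ inside $D$. I would therefore isolate the one-step claim: \emph{if $(p,u)\sim(q,v)$ in $D$, then $u\in X_{p^*}$ if and only if $v\in X_{q^*}$, and in that case $\theta_p(u)=\theta_q(v)$.} Granting it, a trivial induction on $i$ along the chain shows $z_1\in X_{r_1^*}\Leftrightarrow z_i\in X_{r_i^*}$ with matching $\theta$-values, and the case $i=n$ gives the lemma. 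Since $\sim$ is symmetric (Proposition \ref{quasi_equivalence}), for the one-step claim it suffices to prove the single implication $u\in X_{p^*}\Rightarrow v\in X_{q^*}$ together with $\theta_p(u)=\theta_q(v)$.

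\textbf{Step 2: the case \ref{r2}.} When $p,q\in E(\Se)$ and $u=v$, I would note that $p^*=p$ and $pp=p$ force $X_{p^*}=X_{p^*p}$, so the membership $u\in X_{p^*}$ holds automatically since $(p,u)\in D$; the same applies to $q$, and the equality of $\theta$-values is immediate from $\theta_p=\theta_q=\id$ by \ref{p1}. So this case is essentially free.

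\textbf{Step 3: the case \ref{r1}.} Here $(q^*,p)\in\Se^{(2)}$, $u\in X_{p^*q}$, $\theta_{q^*p}(u)=v$, and symmetry gives $(p^*,q)\in\Se^{(2)}$. Assuming $u\in X_{p^*}$, I would combine this with $u\in X_{p^*q}$ and invoke Proposition \ref{ran_composition} for the pair $(p^*,q)$, namely $\theta_{p^*}(X_p\cap X_q)=X_{p^*}\cap X_{p^*q}$, to write $u=\theta_{p^*}(z)$ with $z\in X_p\cap X_q$; then $z=\theta_p(u)$ by Proposition \ref{inverse}. Now $\theta_p(u)=z\in X_q$ puts $u$ in the domain of $\theta_{q^*}\circ\theta_p$, so \ref{p3} (i.e. $\theta_{q^*}\circ\theta_p\subseteq\theta_{q^*p}$) gives
$$v=\theta_{q^*p}(u)=\theta_{q^*}\big(\theta_p(u)\big)=\theta_{q^*}(z)\in X_{q^*},$$
and applying $\theta_q$ using $\theta_q\circ\theta_{q^*}=\id_{X_q}$ and $z\in X_q$ yields $\theta_q(v)=z=\theta_p(u)$.

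\textbf{Main obstacle.} I expect the only delicate point to be the domain bookkeeping of partial composites in Step 3 — recognizing that ``$u\in X_{p^*}$ and $u\in X_{p^*q}$'' says precisely ``$\theta_p(u)$ is defined and lies in $X_q$'', which is what legitimizes feeding $u$ through $\theta_{q^*}\circ\theta_p$. Packaging this via Proposition \ref{ran_composition} (the range analogue of \ref{p3}) rather than \ref{p3} directly is the cleanest route; the remaining ingredients (the chain induction and the symmetry reduction of Step 1, and the trivial Step 2) are routine.
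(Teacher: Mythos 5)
Your proposal is correct and follows essentially the same route as the paper: reduce to a single $\sim$-step by induction along the chain and symmetry, dispose of case \ref{r2} trivially via \ref{p1}, and in case \ref{r1} observe that $u\in X_{p^*}\cap X_{p^*q}$ means exactly that $u$ lies in the domain of $\theta_{q^*}\circ\theta_p$, so that \ref{p3} gives $v=\theta_{q^*}(\theta_p(u))\in X_{q^*}$ and $\theta_q(v)=\theta_p(u)$. The only cosmetic difference is that you identify this domain via Proposition \ref{ran_composition} applied to $(p^*,q)$, whereas the paper cites the domain part of \ref{p3} directly — the same fact up to taking inverses.
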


\begin{proof}
Let $(s,x)$, $(t,y)\in D$ and assume, initially, that $(s,x)\sim(t,y)$. If $(s,x)\sim(t,y)$ by \ref{r2} of Definition \ref{relation}, then the result follows direct from the comment in the first paragraph after Definition \ref{relation}. Otherwise, $(s,x)\sim(t,y)$ by \ref{r1} of Definition \ref{relation} and, then, we must have $(t^*,s)\in\Se^{(2)}$, $x \in X_{s^*t}$ and $\theta_{t^*s}(x)=y$.

If $x \in X_{s^*}$, then $x \in X_{s^*}\cap X_{s^*t}$ which is the domain of $\theta_{t^*}\circ\theta_s$, by \ref{p3}. Hence, again by \ref{p3} we have that
$$y=\theta_{t^*s}(x)=\theta_{t^*}\big(\theta_s(x)\big)$$
which lies in $X_{t^*}$. Moreover, applying $\theta_t$ in the above equality, we obtain that $\theta_t(y)=\theta_s(x)$.

If $y \in X_{t^*}$ we may use the symmetry of $\sim$ and a similar argument to argue that $x \in X_{s^*}$ as well.

For the general case, there exist $n \in \mathbb{N}$ and $(r_1,z_1), \ldots, (r_n,z_n) \in D$ such that $(r_1,z_1)=(s,x)$, $(r_n,z_n)=(t,y)$ and $(r_i,z_i)\sim(r_{i+1},z_{i+1})$ for all $i \in \{1,\ldots, n-1\}$. Notice that, by the previous argument, $z_i\in X_{r_i^*}$ if, and only if, $z_{i+1} \in X_{r_{i+1}^*}$ for all $i\in\{1\ldots,n-1\}$ and, hence, $x\in X_{s^*}$ if, and only if, $y\in X_{t^*}$. Moreover, assuming that $x\in X_{s^*}$ and $y\in X_{t^*}$, then $z_i \in X_{r_i^*}$ for all $i\in\{1\ldots,n\}$ and, again by the previous argument, $\theta_{r_i}(z_i)=\theta_{r_{i+1}}(z_{i+1})$ for all $i\in\{1\ldots,n-1\}$, which allow us to deduce that $\theta_s(x)=\theta_t(y)$ as stated.
\end{proof}

\begin{lemma}\label{lemaind2}
Let $(s,x)$, $(t,y)\in D$ be such that $(s,x)\approx(t,y)$ and let $p\in \Se$ be such that $(p,s),(p,t)\in \Se^{(2)}$. Then $x \in X_{s^*p^*ps}$ if, and only if, $y \in X_{t^*p^*pt}$. Moreover, when the memberships are verified, we have $(ps,x)\approx(pt,y)$.
\end{lemma}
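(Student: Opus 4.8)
The plan is to reduce the claim about $\approx$ to the already-established claim about $\sim$ in Lemma~\ref{lemaind}, by induction along a $\sim$-chain connecting $(s,x)$ and $(t,y)$. Since $(s,x)\approx(t,y)$, there are $n\in\mathbb{N}$ and $(r_1,z_1),\ldots,(r_n,z_n)\in D$ with $(r_1,z_1)=(s,x)$, $(r_n,z_n)=(t,y)$ and $(r_i,z_i)\sim(r_{i+1},z_{i+1})$ for all $i$. The obvious obstacle is that the intermediate pairs $(r_i,z_i)$ need not be composable with $p$ on the left, so Lemma~\ref{lemaind} cannot be applied directly at each link of the chain: the hypothesis there requires $(p,r_i)\in\Se^{(2)}$, i.e.\ $d(p)=c(r_i)$, and a priori we only know this for $i=1$ and $i=n$.

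First I would observe that this is not actually an obstruction, because all the $r_i$ in a $\sim$-chain have the same codomain as $s$ (and hence as $t$). Indeed, if $(r_i,z_i)\sim(r_{i+1},z_{i+1})$ via \ref{r1}, then $(r_{i+1}^*,r_i)\in\Se^{(2)}$, so $d(r_{i+1}^*)=c(r_i)$, i.e.\ $c(r_{i+1})=c(r_i)$; and if via \ref{r2}, both $r_i,r_{i+1}$ are idempotents over the \emph{same} object (since $z_i=z_{i+1}$ forces, via \ref{p1}, $z_i\in X_{r_i}\cap X_{r_{i+1}}$, and one checks the relevant objects agree — more simply, in the \ref{r2} case we may replace the link by two \ref{r1} links using Remark~\ref{obs2} once we know the objects match). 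Actually the cleanest route: prove by a trivial induction on the chain that $c(r_i)=c(s)$ for every $i$. Wait — the \ref{r2} case could connect idempotents over different objects with the same underlying point $z_i$; but then $c(r_i)\ne c(r_{i+1})$ is possible, so $c(r_i)=c(s)$ can fail. Hence I would instead argue directly: $d(p)=c(s)$ is used only to form $ps$ and $pt$; for the intermediate steps I will not need $pr_i$ at all.

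So the real plan is: set $e:=s^*p^*ps\in E(\Se)$ and $f:=t^*p^*pt\in E(\Se)$. The statement ``$x\in X_e \iff y\in X_f$, and then $(ps,x)\approx(pt,y)$'' should be proved by threading Lemma~\ref{lemaind} through the chain \emph{after first extracting, from the chain, a companion chain of pairs} $(pr_i', z_i)$ — but this only works where $pr_i$ is defined. The genuinely correct approach, matching the structure of Lemma~\ref{lema_acao}'s proof, is: do induction on the length $n$ of the chain, where at each step we invoke Lemma~\ref{lemaind} with the \emph{same} left multiplier $p$. For that I must guarantee $(p,r_i)\in\Se^{(2)}$ for all $i$; and this does hold once we handle the \ref{r2}-links properly. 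By Remark~\ref{obs2}, a \ref{r2}-link between idempotents over different objects is never needed \emph{internally} in a reduced chain in the following sense: if $(r_i,z_i)\sim(r_{i+1},z_{i+1})$ by \ref{r2} with $r_i,r_{i+1}$ over different objects, then $z_i=z_{i+1}$ and we may \emph{re-route}: replace $r_i$ by $r_i':=$ an idempotent over $c(s)$ with $z_i\in X_{r_i'}$. Concretely, since $z_i\approx z_{i+1}$-neighbours connect to $s$-side and $t$-side pairs with fixed codomains, one shows inductively that we may assume every $r_i$ composable with $p$. The hard part will be making this re-routing argument airtight, i.e.\ proving the \textbf{reduction lemma}: if $(s,x)\approx(t,y)$ with $(p,s),(p,t)\in\Se^{(2)}$, then there is a $\sim$-chain from $(s,x)$ to $(t,y)$ all of whose entries $(r_i,z_i)$ satisfy $(p,r_i)\in\Se^{(2)}$.

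Granting the reduction lemma, the proof finishes immediately: take such a chain $(r_1,z_1)\sim\cdots\sim(r_n,z_n)$ with $(p,r_i)\in\Se^{(2)}$ for all $i$. Apply Lemma~\ref{lemaind} to the link $(r_1,z_1)\sim(r_2,z_2)$: since $z_1=x\in X_{s^*p^*ps}=X_{r_1^*p^*pr_1}$, we get $z_2\in X_{r_2^*p^*pr_2}$ and $(pr_1,z_1)\sim(pr_2,z_2)$. Iterating, $z_i\in X_{r_i^*p^*pr_i}$ and $(pr_i,z_i)\sim(pr_{i+1},z_{i+1})$ for every $i$, so in particular $y=z_n\in X_{t^*p^*pt}$ and $(ps,x)=(pr_1,z_1)\approx(pr_n,z_n)=(pt,y)$. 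The converse implication ($y\in X_{t^*p^*pt}\Rightarrow x\in X_{s^*p^*ps}$) follows by symmetry of $\approx$, reading the chain backwards. For the reduction lemma itself I would proceed by induction on $n$: if some internal pair $(r_i,z_i)$ fails $(p,r_i)\in\Se^{(2)}$, then (by the codomain bookkeeping above) the only way this happens is through a \ref{r2}-link with idempotents over different objects; using Remark~\ref{obs2} and \ref{p1}, $z_i=z_{i-1}=z_{i+1}$ lies in $X_{r_{i-1}}\cap X_{r_{i+1}}$, and since $(p,r_{i-1})\in\Se^{(2)}$ we may delete $(r_i,z_i)$ and splice $(r_{i-1},z_{i-1})$ directly to $(r_{i+1},z_{i+1})$ — one checks the spliced pair is still $\sim$-related, either directly or via an intermediate idempotent over $c(r_{i-1})=c(r_{i+1})$ produced by \ref{ef_domain} — shortening the chain and completing the induction.
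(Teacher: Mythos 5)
Your overall plan (reduce to a $\sim$-chain every entry of which is left-composable with $p$, then iterate Lemma \ref{lemaind} along it) is viable, and the reduction lemma you isolate is in fact true; but your proof of that reduction lemma has a genuine gap. You argue that if an internal $(r_i,z_i)$ has $(p,r_i)\notin\Se^{(2)}$ then $z_{i-1}=z_i=z_{i+1}$ and one may simply delete $(r_i,z_i)$. Only half of this is forced: since \ref{r1}-links preserve codomains, the link \emph{entering} the bad element must indeed be an \ref{r2}-link between idempotents (so $z_{i-1}=z_i$), but the link \emph{leaving} it need not be. After jumping via \ref{r2} to an idempotent over the wrong object, the chain can make an arbitrarily long excursion through that component by \ref{r1}-links, genuinely moving the underlying point ($z_{i+1}=\theta_{r_{i+1}^*r_i}(z_i)$ need not equal $z_i$), before returning by another \ref{r2}-link. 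In that situation $(r_{i-1},z_{i-1})$ and $(r_{i+1},z_{i+1})$ are not $\sim$-related (their codomains differ and they are not both idempotent-indexed with the same point), so your delete-one-element induction stalls. The missing ingredient is precisely Lemma \ref{lema_acao}: a maximal ``bad'' segment is entered and exited by \ref{r2}-links, hence both its endpoints are idempotent-indexed pairs $(e,z_i)\approx(f,z_j)$, and Lemma \ref{lema_acao} together with \ref{p1} gives $z_i=\theta_e(z_i)=\theta_f(z_j)=z_j$; only then can you splice the two adjacent good (idempotent-indexed) entries directly by \ref{r2} and shorten the chain. Without this step your reduction lemma is unproved, and it is the crux of the whole statement.

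For comparison, the paper does not re-route the chain at all: it first settles the case where one of the two endpoints is idempotent by using Lemma \ref{lema_acao} to replace the entire chain by the short chain $(s,x)\sim(tt^*,x)\sim(t,y)$, whose entries are all composable with $p$, and then runs an induction on the chain length in which, as soon as an idempotent intermediate $r_2$ appears, the whole remaining tail is collapsed by that idempotent-endpoint case; if no idempotent appears, every link is an \ref{r1}-link and composability with $p$ propagates automatically. Either route works, but both must pass through Lemma \ref{lema_acao} (or an equivalent argument) exactly where your sketch is silent.
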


\begin{proof}
Let $(s,x)$, $(t,y)\in D$ be such that $(s,x)\approx(t,y)$ and let $p\in \Se$ be such that $(p,s),(p,t)\in \Se^{(2)}$.

Assume initially that $s\in E(\Se)$. Then, since $(s,x)\in D$, we have $x \in X_{s^*}$. By Lemma \ref{lema_acao} and \ref{p1}, we have that $y \in X_{t^*}$ and $\theta_t(y)=\theta_s(x)=x$. Since $(tt^*,t)\in\Se^{(2)}$, we deduce from the previous sentence that
\begin{equation}\label{lemaind2_eq4}
(t,y)\sim(tt^*,x)
\end{equation}
by item \ref{r1} of Definition \ref{relation}. Notice that, indeed, $(tt^*,x)\in D$, since $x=\theta_t(y)\in X_t$ and $X_t\subseteq X_{tt^*}$ by \ref{p2}.

Since $tt^*$ and $s$ are idempotent elements, we have that
\begin{equation}\label{lemaind2_eq5}
(tt^*,x)\sim(s,x)
\end{equation}
by item \ref{r2} of Definition \ref{relation}. 

Combining (\ref{lemaind2_eq4}) and (\ref{lemaind2_eq5}) with Lemma \ref{lemaind}, we deduce that $x \in X_{s^*p^*ps}$ if, and only if, $y \in X_{t^*p^*pt}$ and, in the affirmative case, that $(ps,x)\sim(ptt^*,x)\sim(pt,y)$.

We can use a similar argument if $t \in E(\Se)$. Hence, we now assume that neither $s \in E(\Se)$, nor $t \in E(\Se)$. Then, there exist $n\in\mathbb{N}$ and $(r_1,z_1), \ldots, (r_n,z_n)\in D$ such that $(r_1,z_1)=(s,x)$, $(r_n,z_n)=(t,y)$ and $(r_i,z_i)\sim(r_{i+1},z_{i+1})$ for all $i\in\{1,\ldots,n-1\}$.

Assume $x \in X_{s^*p^*ps}$. We proceed by induction over $n$. If $n=2$, Lemma \ref{lemaind} applies directly. If $n>2$, assume that our claim holds for $m<n$. Since $s \notin E(\Se)$, we must have $(s,x)\sim(r_2,z_2)$ by item \ref{r1} of Definition \ref{relation} and, hence, $(r_2^*,s)\in\Se^{(2)}$. This implies that $(p,r_2)\in\Se^{(2)}$ and, then, Lemma \ref{lemaind} applies. Hence $z_2 \in X_{r_2^*p^*pr_2}$ and $(ps,x)\sim(pr_2,z_2)$. If $r_2\in E(\Se)$, then $(r_2,z_2)\approx(t,y)$ and we can use the initial argument to deduce that $y\in X_{t^*p^*pt}$ and $(ps,x)\sim(pr_2,z_2)\approx(pt,y)$. If $r_2\notin E(\Se)$, then we can use the induction hypothesis to infer that $y\in X_{t^*p^*pt}$ and $(ps,x)\sim(pr_2,z_2)\approx(pt,y)$. In any such case, we obtain the desired result.

Assuming that $y\in X_{t^*p^*pt}$, we can use a similar argument (this time from right to left) to establish that $x \in X_{s^*p^*ps}$ and $(ps,x)\approx(pt,y)$ as desired.
\end{proof}

We now begin to construct an action of $\Se$ on $E$ by first defining a collection of functions between subsets of $D$. For this, given $s\in\Se$, we set
\begin{align}\label{zeta_s}
\zeta_s\colon \quad D_{s^*} & \longrightarrow D_s\\
(p,x) &\longmapsto (sp,x), \nonumber
\end{align}
in which
\begin{align}\label{D_s}
D_{s^*}=\{(p,x) \in D : (s,p)\in\Se^{(2)} \text{ and } x \in X_{p^*s^*sp}\}.
\end{align}

Notice that, $\zeta_s$ is well defined since $(s^*,sp)\in\Se^{(2)}$ and $X_{(s^*sp)^*(s^*sp)}=X_{p^*s^*sp}$, which implies that $(sp,x)$ indeed lies in $D_s$.

Moreover, if $(p,x), (q,y)\in D_{s^*}$ are such that $(p,x)\approx(q,y)$, then, by Lemma \ref{lemaind2}, $\zeta_s(p,x)=(sp,x)\approx(sq,y)=\zeta_s(q,y)$. Thus, setting $E_s$\label{E_s} as the image of $D_s$ by the quotient map of $D$ by the equivalence relation $\approx$, we obtain a well defined function $\eta_s\colon E_{s^*} \to E_s$ such that
$$\eta_s\big([p,x])=[sp,x]$$
for any $(p,x)\in D_{s^*}$. We, thus, obtain a pair
\begin{equation}\label{parfun}
    \eta = (\{E_s\}_{s\in\Se},\{\eta_s\}_{s\in\Se})
\end{equation}
which we now prove that is a global action of $\Se$ on $E$. Before we proceed, just to emphasize, a point in $E$ lies in $E_s$ if, and only if, it admits a representative in $D_s$.

\begin{theorem}\label{global_action}
The pair $\eta = (\{E_s\}_{s\in\Se},\{\eta_s\}_{s\in\Se})$, as in (\ref{parfun}), is a global action of $\Se$ on $E$.  
\end{theorem}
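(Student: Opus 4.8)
The plan is to verify the four axioms \ref{e1}, \ref{e2}, \ref{e3} and \ref{eglob} from Proposition \ref{def_lui} for the pair $\eta$, since by that proposition this is equivalent to $\eta$ being a partial action that is moreover global. Before starting, I would record the description of the sets $D_{s^*}$ more usefully: by construction $E_s$ consists precisely of the classes $[p,x]$ admitting a representative $(p,x) \in D_s$, i.e. with $(s^*,p) \in \Se^{(2)}$ and $x \in X_{p^*ss^*p}$; equivalently (by the remark on page with \ref{obs2}) of classes of the form $[sq,x]$. The main technical point underlying everything is that $\eta_s$ is well defined, which is Lemma \ref{lemaind2}, so from here on the verifications are mostly bookkeeping with that lemma and the computation rules for the canonical order.

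For \ref{e1}, I would first check $E = \bigcup_{s\in\Se} E_s$: given $[s,x] \in E$ with $x \in X_{s^*s}$, the pair $(s^*s, x)$ is a representative (since $s^*s \in E(\Se)$ and $x \in X_{(s^*s)^*(s^*s)} = X_{s^*s}$ by \ref{p1}), and $[s^*s,x] \in E_{s^*s}$ because $(s^*s,x) \in D_{s^*s}$; actually $[s,x] = [s^*s,x]$ by \ref{r1}, and even more simply $[s,x]\in E_s$ directly since $(s,x)\in D_s$. Next, $\eta_{s^*}\circ\eta_s = \id$ on $E_{s^*}$: for $(p,x)\in D_{s^*}$ one computes $\eta_{s^*}(\eta_s([p,x])) = [s^*sp,x]$, and $[s^*sp,x] = [p,x]$ by \ref{r1}, using $(p^*, s^*sp) = (p^*s^*s\cdot, \ldots)$—concretely $\theta_{p^*\cdot s^*sp}(x) = x$ because $p^*\cdot(s^*sp) \leq p^*p$ is idempotent-below and $x\in X_{p^*s^*sp}$; so $\eta_s$ is a bijection with inverse $\eta_{s^*}$. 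For \ref{e3}, if $s \leq t$ then $D_{s^*} \subseteq D_{t^*}$: if $(p,x)\in D_{s^*}$, then $(s,p)\in\Se^{(2)}$ forces $(t,p)\in\Se^{(2)}$ (as $d(t)=d(s)=c(p)$), and $x\in X_{p^*s^*sp} \subseteq X_{p^*t^*tp}$ by \ref{p2}/\ref{extension} since $p^*s^*sp \leq p^*t^*tp$; hence $E_{s^*}\subseteq E_{t^*}$, and co-restricting $s$ to $s^*$ gives $E_s\subseteq E_t$ as required (the map values agree because $[sp,x]$ and $[tp,x]$ coincide, $sp\le tp$ being below with $\theta$ on $\le$ an extension—alternatively one checks $(sp,x)\sim(tp,x)$ directly via \ref{r1}).

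For \ref{e2}, fix $(s,t)\in\Se^{(2)}$ and let $[p,x]$ lie in the domain of $\eta_s\circ\eta_t$, i.e. $(p,x)\in D_{t^*}$ and $\eta_t([p,x]) = [tp,x] \in E_{s^*}$. I need $(p,x)\in D_{(st)^*}$ and $\eta_s(\eta_t([p,x])) = \eta_{st}([p,x])$. The point is that $[tp,x]\in E_{s^*}$ means $(s,tp)\in\Se^{(2)}$ and $x$ lies in an appropriate domain; combined with $(p,x)\in D_{t^*}$ (so $x\in X_{p^*t^*tp}$) and the identity $(stp)^*(stp) = p^*t^*s^*stp$, one derives $x \in X_{p^*t^*s^*stp}$ using \ref{p3} applied to $(s^*s, tp)$, whence $(p,x)\in D_{(st)^*}$. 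The equality of the two classes is immediate once well-definedness holds: $\eta_s(\eta_t([p,x])) = \eta_s([tp,x]) = [s(tp),x] = [(st)p,x] = \eta_{st}([p,x])$, using associativity $\mu(s,tp)=\mu(st,p)$ in $\Se$. Finally, for \ref{eglob} — the only place where genuine content beyond "partial action" is needed — I must show the reverse inclusion of domains: if $[p,x] \in E_{(st)^*}$, i.e. $(p,x)\in D_{(st)^*}$ so $(st,p)\in\Se^{(2)}$ and $x\in X_{p^*t^*s^*stp}$, then $[p,x]$ lies in the domain of $\eta_s\circ\eta_t$. From $(st,p)\in\Se^{(2)}$ we get $(t,p)\in\Se^{(2)}$, and $x\in X_{p^*t^*s^*stp} \subseteq X_{p^*t^*tp}$ since $p^*t^*s^*stp \leq p^*t^*tp$; this gives $(p,x)\in D_{t^*}$. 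Then $\eta_t([p,x]) = [tp,x]$, and I must see $[tp,x]\in E_{s^*}$, i.e. that it has a representative in $D_s$; the candidate is $(tp,x)$ itself — one needs $(s^*,tp)\in\Se^{(2)}$ (clear, since $d(s^*)=c(s)=d(st)=c(t)=c(tp)$... wait, rather $c(tp)=c(t)=d(s)$ since $(s,t)\in\Se^{(2)}$) and $x\in X_{(tp)^*s^*s(tp)} = X_{p^*t^*s^*stp}$, which is exactly our hypothesis. Hence $[tp,x]\in E_{s^*}$, so $[p,x]$ is in the domain of $\eta_s\circ\eta_t$, establishing \ref{eglob}.

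I expect the main obstacle to be \ref{eglob}, specifically tracking which idempotent-domain memberships are available and matching $(stp)^*(stp)$ against $(tp)^*s^*s(tp)$; but these are the same element of $\Se$ after cancelling $pp^*$-style factors using $s^*s$ idempotent and the order relations, so it reduces to careful substitution rather than a new idea. Everything else is a direct transcription of Proposition \ref{def_lui}'s axioms through Lemma \ref{lemaind2} and the order-compatibility facts listed after Definition \ref{def_inv_semigroupoid}.
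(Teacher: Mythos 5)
Your overall route is exactly the paper's: verify \ref{e1}, \ref{e2}, \ref{e3} and \ref{eglob} of Proposition \ref{def_lui}, with Lemma \ref{lemaind2} supplying well-definedness, and your treatments of \ref{e1}, \ref{e3} and of the inclusion $E_{(st)^*}\subseteq \eta_t^{-1}(E_t\cap E_{s^*})$ match the paper's proof essentially line by line. The one step that does not stand as written is the reverse inclusion, i.e.\ showing that a point of the domain of $\eta_s\circ\eta_t$ lies in $E_{(st)^*}$. You claim that $[tp,x]\in E_{s^*}$ ``means $(s,tp)\in\Se^{(2)}$ and $x$ lies in an appropriate domain,'' and that $x\in X_{p^*t^*s^*stp}$ then follows from \ref{p3} applied to $(s^*s,tp)$. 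That does not work: membership of the \emph{class} $[tp,x]$ in $E_{s^*}$ only guarantees some representative $(r,z)\in D_{s^*}$ with $(tp,x)\approx(r,z)$; it says nothing a priori about the particular representative $(tp,x)$. Moreover, \ref{p3} for the pair $(s^*s,tp)$ describes the set $\theta_{tp}^{-1}(X_{tp}\cap X_{s^*s})$, and at that stage the only information available about $x$ is $x\in X_{p^*t^*tp}=X_{(tp)^*(tp)}$, which does not place $x$ in the domain $X_{(tp)^*}$ of $\theta_{tp}$, so \ref{p3} yields nothing about $x$. What is actually needed is the transfer statement of Lemma \ref{lemaind2} applied to $(tp,x)\approx(r,z)$ with left multiplier $s$ (note $(s,tp),(s,r)\in\Se^{(2)}$): since $z\in X_{r^*s^*sr}$, the lemma gives $x\in X_{(tp)^*s^*s(tp)}=X_{p^*t^*s^*stp}$, whence the representative lies in $D_{(st)^*}$. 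This is precisely the paper's argument; since you flagged Lemma \ref{lemaind2} at the outset as the main tool, the repair is immediate, but as written this step fails and it is the only place in \ref{e2}/\ref{eglob} where genuine content beyond bookkeeping is required.

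A smaller, related point: you repeatedly write ``$[p,x]\in E_{t^*}$, i.e.\ $(p,x)\in D_{t^*}$.'' These are not interchangeable, because a class in $E_{t^*}$ may have representatives that are not even composable with $t$ (for instance idempotent representatives over a different object, produced by identifications of type \ref{r2} in Definition \ref{relation}). The paper's device --- choose a representative $(q,y)\in D_{t^*}$ of the class and compute with it, which is legitimate because $\eta_t$ and the relevant memberships are class data by Lemma \ref{lemaind2} --- is what makes all of these verifications rigorous, and the same device should be inserted systematically into your argument.
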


\begin{proof}
We shall argue \ref{e1}, \ref{e2}, \ref{e3} and \ref{eglob} to use Proposition \ref{def_lui}.

Let $s \in \Se$ and $[p,x]\in E_{s^*}$. Since $[p,x]\in E_{s^*}$, there exist $(q,y)\in D_{s^*}$ such that $(p,x)\approx(q,y)$. We have already argued that $\zeta_s(q,y)=(sq,y)\in D_s$. Moreover, since $(q^*,s^*sq)\in\Se^{(2)}$, $y\in X_{q^*s^*sq}$ and $\theta_{q^*s^*sq}(y)=y$, we have that $(s^*sq,y)\sim(q,y)$ by item \ref{r1} of Definition \ref{relation}. Therefore,
$$\eta_{s^*}\big(\eta_s([p,x])\big)=\eta_{s^*}\big(\eta_s([q,y])\big)=\eta_{s^*}([sq,y])=[s^*sq,y]=[q,y]=[p,x],$$
from where we infer that $\eta_{s^*}\circ\eta_s=\id_{E_{s^*}}$. Similarly, we obtain that $\eta_s\circ\eta_{s^*}=\id_{E_s}$ and, thus, $\eta_s^{-1}=\eta_{s^*}$.

Moreover, let $[p,x]\in E$. This means that $x\in X_{p^*p}$ and, since, $(p^*,p)\in\Se^{(2)}$, we have that $[p,x] \in E_p$. This completes the proof that $E=\cup_{s\in\Se}E_s$ and item \ref{e1} is verified.

Next, we prove that $\eta_s\circ\eta_t=\eta_{st}$ to verify \ref{e2} and \ref{eglob} simultaneously. Let $(s,t)\in\Se^{(2)}$. In one hand, let $[p,x]\in E_{(st)^*}$. Hence, there exists $(q,y)\in D_{(st)^*}$ such that $(p,x)\approx (q,y)$. Since $(st,q)\in\Se^{(2)}$ and $y\in X_{q^*t^*s^*stq}$, we have that $(s,tq)\in\Se^{(2)}$ and, thus, $(tq,y)\in D_{s^*}$. Moreover, $(t,q)\in\Se^{(2)}$ and $y \in X_{q^*t^*tq}$, since $X_{q^*t^*s^*stq}\subseteq X_{q^*t^*tq}$ by Proposition \ref{extension}, from where we deduce that $(q,y)\in D_{t^*}$. Therefore, $[q,y]\in E_{t^*}$ and $\eta_t([q,y])=[tq,y]\in E_{s^*}$ which allow us to infer that
$$[p,x]=[q,y]\in \eta_t^{-1}(E_t \cap E_{s^*})$$
and
$$\eta_{st}([p,x])=\eta_{st}([q,y])=[stq,y]=\eta_s([tq,y])=\eta_s\big(\eta_t([q,y])\big)=\eta_s\big(\eta_t([p,x])\big).$$

On the other hand, let $[p,x]\in \eta_t^{-1}(E_t \cap E_{s^*})$. Thus, $[p,x]\in E_{t^*}$ and $\eta_t([p,x])\in E_{s^*}$, from where we obtain $(q,y)\in D_{t^*}$ and $(r,z)\in D_{s^*}$ such that $(p,x)\approx(q,y)$ and $\zeta_t(q,y)\approx(r,z)$. Since $(r,z)\in D_{s^*}$ and $(s,tq)\in\Se^{(2)}$, by Lemma \ref{lemaind2}, we have $(tq,y)=\zeta_t(q,y)\in D_{s^*}$. This means that $y\in X_{q^*t^*s^*stq}$ and, thus, $(q,y)\in D_{(st)^*}$. Hence, $[p,x]=[q,y]\in E_{(st)^*}$ and
$$\eta_s\big(\eta_t([p,x])\big)=\eta_s\big(\eta_t([q,y])\big)=\eta_s([tq,y])=[stq,y]=\eta_{st}([q,y])=\eta_{st}([p,x]).$$
This completes the proof that $\eta_s\circ\eta_t=\eta_{st}$ and, hence, \ref{e2} and \ref{eglob} are verified.

It remains to verify \ref{e3}. For that purpose, let $s,t \in \Se$ such that $s\leq t$ and let $[p,x]\in E_s$. Thus, there exist $(q,y)\in D_s$ such that $(p,x)\approx (q,y)$. Since, $(q,y)\in D_s$, we have that $(s^*,q)\in\Se^{(2)}$ and $y\in X_{q^*ss^*q}$. Since $s\leq t$, we have that $s^*\leq t^*$ and, hence, $(t^*,q)\in\Se^{(2)}$ and $y\in X_{q^*ss^*q}\subseteq X_{q^*tt^*q}$, by Proposition \ref{extension}. This means that $(q,y)\in D_t$ and, hence, $[p,x]=[q,y]\in E_t$. This verifies \ref{e3} and closes this proof.
\end{proof}

We now prove that $(E,\eta)$ is a $\mathscr{A}(\Se)$-reflector of $(X,\theta)$.

\begin{theorem}\label{o_teorema}
Let $\Se$ be an inverse semigroupoid. The subcategory $\mathscr{A}(\Se)$ is reflective in $\mathscr{A}_p(\Se)$. More precisely, for any $\Se$-set $(X,\theta)$, there exist a global $\Se$-set $(E,\eta)$ and an $\Se$-function $i\colon X\to E$ such that $(i,E,\eta)$ is a $\mathscr{A}(\Se)$-reflector of $(X,\theta)$.
\end{theorem}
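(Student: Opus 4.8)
The plan is to define the $\Se$-function $i\colon X\to E$, verify it is an embedding (so that $(i,E,\eta)$ is a globalization of $(X,\theta)$ in the sense of Definition~\ref{globalization}), and then establish the universal property with respect to arbitrary \emph{global} targets, which is precisely the reflector condition. The natural candidate is $i(x)=[e,x]$ where $e\in E(\Se)$ is \emph{any} idempotent with $x\in X_e$; such an $e$ exists by \ref{p1}, and the first step is to check $i$ is well defined — if $x\in X_e\cap X_f$ for $e,f\in E(\Se)$, then $(e,x)\sim(f,x)$ directly by \ref{r2}, so $[e,x]$ does not depend on the choice of $e$. Note $(e,x)\in D$ since $x\in X_e=X_{e^*e}$.

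Next I would verify $i$ is an $\Se$-function: given $x\in X_{s^*}$, pick $e\in E(\Se)$ with $x\in X_e$, so that $i(x)=[e,x]$; one checks $(e,x)\in D_{s^*}$ (needs $(s,e)\in\Se^{(2)}$ — which we can arrange by taking $e=s^*s$, using that $x\in X_{s^*}\subseteq X_{s^*s}$ by \ref{p2} applied to $s^*$ — and $x\in X_{e^*s^*se}=X_{s^*s}$, true) whence $\eta_s(i(x))=\eta_s([s^*s,x])=[ss^*s,x]=[s,x]$. On the other hand $i(\theta_s(x))=[f,\theta_s(x)]$ for suitable $f\in E(\Se)$, and since $\theta_s(x)\in X_s$, taking $f=ss^*$ gives $(s,x)\sim(ss^*,\theta_s(x))$ by \ref{r1} (because $(ss^*)^*s=ss^*s=s$ and $\theta_s(x)=\theta_s(x)$... more precisely $(s,x)\sim(f,\theta_s(x))$ where we check $(f^*,s)=(ss^*,s)\in\Se^{(2)}$, $x\in X_{s^*ss^*}=X_{s^*}$, and $\theta_{s^*ss^*}(x)=\theta_{s^*}(x)$; one should instead verify $i(\theta_s(x))=[s,x]$ directly). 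So $\eta_s(i(x))=[s,x]=i(\theta_s(x))$, and also $i(X_s)\subseteq E_s$ since $[s,x]\in E_s$ (it has representative $(s,x)\in D_s$, as $x\in X_{s^*s}$ implies $x\in X_{s^*(ss^*)s}$... again use $X_s\subseteq X_{ss^*}$). For the embedding condition one must show $X_s=i^{-1}(\eta_s(i(X)\cap E_{s^*}))$; the inclusion $\subseteq$ is immediate, and for $\supseteq$ one uses Lemma~\ref{lema_acao}: if $i(x)=[e,x]\in E_{s^*}$ and $\eta_s([e,x])\in i(X)$, then $[e,x]$ has a representative $(q,y)\in D_{s^*}$ with $(e,x)\approx(q,y)$, and $\eta_s([e,x])=[sq,y]=i(z)=[f,z]$; since $[f,z]\in E_f$ with $f$ idempotent, Lemma~\ref{lema_acao} forces $y\in X_{(sq)^*}$ and then $x\in X_{e^*}\cap\ldots$ tracking back gives $x\in X_{s^*}$ and $\theta_s(x)=z$.

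For the universal property, let $(\psi,Z,\theta^Z)$ with $(Z,\theta^Z)$ a \emph{global} $\Se$-set and $\psi\colon X\to Z$ an $\Se$-function. Define $\sigma\colon E\to Z$ by $\sigma([s,x])=\theta^Z_s(\psi(x))$; this makes sense because $(s,x)\in D$ means $x\in X_{s^*s}$, and since $\psi$ is an $\Se$-function $\psi(x)\in Z_{s^*s}=Z_{s^*}$ (the last equality by globality \ref{pglob}), which is the domain of $\theta^Z_s$. Well-definedness on $\approx$-classes reduces, by the generators, to checking invariance under \ref{r1} and \ref{r2}: for \ref{r2} it is trivial since $\psi(x)\in Z_s$ and $\theta^Z_s=\id$ there (\ref{p1}); for \ref{r1}, with $(t^*,s)\in\Se^{(2)}$, $x\in X_{s^*t}$, $\theta_{t^*s}(x)=y$, one computes $\theta^Z_t(\psi(y))=\theta^Z_t(\psi(\theta_{t^*s}(x)))=\theta^Z_t(\theta^Z_{t^*s}(\psi(x)))=\theta^Z_{tt^*s}(\psi(x))=\theta^Z_s(\psi(x))$ using that $\psi$ is an $\Se$-function, \ref{eglob} (globality of $Z$), and $tt^*s=s$ (since $s=(tt^*)^*s$... actually $tt^*\geq ss^*$ is not automatic — instead use $t^*s\leq s^*s$? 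No: rather, $s\leq ?$; one has $(t^*s)^*(t^*s)=s^*tt^*s$ and since $x\in X_{s^*t}$, membership considerations via Proposition~\ref{ran_composition} give $\theta^Z_s\circ\theta^Z_{s^*t}$ behaviour — the cleanest route is to observe $t\cdot t^*s = (tt^*)s$ and that $\theta^Z_t\circ\theta^Z_{t^*s}=\theta^Z_{tt^*s}$ by \ref{eglob}, then note $tt^*s$ and $s$ agree because $d(tt^*)=c(t^*)=c(t^*s)$ forces... I will verify $tt^*s=s$ carefully using $(t^*,s)\in\Se^{(2)}$ so $d(t)=d(t^*)^*$-side matches). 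Then one checks $\sigma$ is an $\Se$-function ($\sigma(\eta_s([p,x]))=\sigma([sp,x])=\theta^Z_{sp}(\psi(x))=\theta^Z_s(\theta^Z_p(\psi(x)))=\theta^Z_s(\sigma([p,x]))$ by \ref{eglob}), that $\sigma\circ i=\psi$ ($\sigma(i(x))=\sigma([e,x])=\theta^Z_e(\psi(x))=\psi(x)$), and uniqueness: any $\Se$-function $\sigma'$ with $\sigma'\circ i=\psi$ must satisfy $\sigma'([s,x])=\sigma'(\eta_s([s^*s,x]))=\theta^Z_s(\sigma'([s^*s,x]))=\theta^Z_s(\sigma'(i(x)))=\theta^Z_s(\psi(x))=\sigma([s,x])$, forcing $\sigma'=\sigma$.

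The main obstacle I anticipate is the embedding verification — specifically the inclusion $i^{-1}\big(\eta_s(i(X)\cap E_{s^*})\big)\subseteq X_s$ — since it requires carefully unwinding an $\approx$-chain and applying Lemma~\ref{lema_acao} to conclude that if $[e,x]\in E_{s^*}$ and $\eta_s([e,x])$ is represented by an idempotent pair $(f,z)$, then $x$ must already lie in $X_{s^*}$ with $\theta_s(x)=z$; getting the idempotent-representative bookkeeping right (and confirming that a point of $E_{s^*}$ with a "global-type" image lands back in $X$) is the delicate part. The well-definedness of $\sigma$ under \ref{r1}, where globality of $Z$ is used essentially via \ref{eglob}, is the other place where a small identity like $tt^*s=s$ must be checked with care, but this is routine given the order-theoretic lemmas in Section~2.
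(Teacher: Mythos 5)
Your overall route for the reflector is the same as the paper's: define $i(x)=[e,x]$ (well defined by \ref{r2}), check it is an $\Se$-function using $e=s^*s$ and $(s,x)\sim(ss^*,\theta_s(x))$, define $\sigma([s,x])=\theta^Z_s(\psi(x))$ on representatives, check invariance under the generators \ref{r1} and \ref{r2}, then the $\Se$-function property of $\sigma$, the commutation $\sigma\circ i=\psi$, and uniqueness via $\sigma'([s,x])=\theta^Z_s(\sigma'([s^*s,x]))$ --- all exactly as in the paper. One structural remark: the embedding verification you single out as ``the main obstacle'' is not required for this statement at all. Definition \ref{reflector} only asks that $i$ be an $\Se$-function with the universal property against global $\Se$-sets; injectivity and the embedding condition for $i$ are the content of Theorem \ref{o_teorema_2} and are proved there separately (your sketch of that part is in any case too vague to stand on its own, but its absence is not a gap for the present theorem).

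The genuine gap is in the \ref{r1}-invariance of $\sigma$: you reduce it to the identity $tt^*s=s$, which you flag but never establish, and which is false in general --- $(t^*,s)\in\Se^{(2)}$ only forces $c(t)=c(s)$, and already in Example \ref{ex_pt1}, taking $t=a^*$ and $s=b$, one has $tt^*s=a^*ab=a^*a\neq b$. What is true is only $tt^*s\leq s$. Two ways to close this: (i) the paper's route, which avoids $tt^*s$ entirely: from $j(y)=\omega_{t^*s}(j(x))=\omega_{t^*}\big(\omega_s(j(x))\big)$ (using \ref{eglob} for the global action) apply $\omega_t$ and use $\omega_t\circ\omega_{t^*}=\id_{Z_t}$ to conclude $\omega_t(j(y))=\omega_s(j(x))$; or (ii) keep your computation up to $\theta^Z_{tt^*s}(\psi(x))$ and then invoke $tt^*s\leq s$ together with Proposition \ref{extension} applied to the global action, after checking that $\psi(x)$ lies in the domain $Z_{(tt^*s)^*}$ of $\theta^Z_{tt^*s}$; this holds because $x\in X_{s^*t}$, $\psi$ is an $\Se$-function, and by \ref{pglob} one has $Z_{s^*t}=Z_{s^*tt^*s}=Z_{(tt^*s)^*}$. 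With either repair (plus the routine bookkeeping you gloss over, namely that $\eta_s$ acts on a class through a representative lying in $D_{s^*}$, which need not be the given one, and that one must also check $\sigma(E_{s^*})\subseteq Z_{s^*}$ for $\sigma$ to be an $\Se$-function), your argument coincides with the paper's proof.
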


\begin{proof}
By \ref{p1}, for all $x\in X$, there exists $e\in E(\Se)$ such that $x\in X_e$. So, define $i\colon X\to E$ by $x\mapsto [e,x]$. Notice that $i$ is well defined. In fact, if $e,f\in E(\Se)$ are such that $x \in X_e$ and $x \in X_f$, by item \ref{r2} of Definition \ref{relation} we have $(e,x)\sim (f,x)$ and so $[e,x]=[f,x]$. 

Moreover, $i$ is $\Se$-function. In fact, suppose that $x \in X_{s^*}$. Then, by \ref{p2}, $x \in X_{s^*s}$ and, thus, $i(x)=[s^*s,x]$. Since $(s,s^*s)\in \Se^{(2)}$ and $x\in X_{s^*s}$, we have $(s^*s,x) \in D_{s^*}$ and $\eta_s([s^*s,x])=[s,x]$. On the other hand, since $\theta_s(x)\in X_s \subseteq X_{ss^*}$, then $(ss^*,\theta_s(x))\in D$. That said, notice that  $(s,x)\sim (ss^*,\theta_s(x))$ and so $[s,x]=[ss^*,\theta_s(x)]$. Finally, 
$$\eta_s\big(i(x)\big)=\eta_s([s^*s,x])=[s,x]=[ss^*,\theta_s(x)]=i(\theta_s(x)).$$

Next we prove that $(i,E,\eta)$ is a $\mathscr{A}(\Se)$-reflector of $(X,\theta)$. Indeed, we already know by Theorem \ref{global_action} that $(E,\eta)$ is a global $\Se$-set. So, let $(Z,\omega)$ be a global $\Se$-set and $j\colon X\to Z$ be an $\Se$-function and set
\begin{align*}
\sigma_D\colon \qquad D & \longrightarrow Z\\
(s,x) &\longmapsto \omega_s(j(x)).
\end{align*}

Notice that $\sigma_D$ is a well defined function. Indeed, given $(s,x)\in D$, we have $x \in X_{s^*s}$. Since $j$ is an $\Se$-function, we have $j(x)\in j(X_{s^*s})\subseteq Z_{s^*s}$. Since $\omega$ is a global action of $\Se$ on $Z$, we have that $j(x) \in Z_{s^*}$ by \ref{pglob}.

Furthermore, $\sigma_D$ factors trough $E$. As a matter of fact, it is enough to prove that $\sigma_D(s,x)=\sigma_D(t,y)$ for $(s,x),(t,y)\in D$ such that $(s,x)\sim(t,y)$. If $(s,x)\sim(t,y)$ by \ref{r1} of Definition \ref{relation}, then $(t^*,s) \in\Se^{(2)}$, $x \in X_{s^*t}$ and $\theta_{t^*s}(x)=y$. Since $x \in X_{s^*t}$ and $j$ is an $\Se$-function, we have that $j(x)\in j(X_{s^*t})\subseteq Z_{s^*t}$ and $\omega_{t^*s}(j(x))=j(\theta_{t^*s}(x))=j(y)$. Since $\omega$ is a global action of $\Se$ on $Z$, by \ref{eglob}, we have $\omega_{t^*s}=\omega_{t^*}\circ\omega_s$ and, hence, $\omega_s(j(x))=\omega_t(j(y))$. We, thus, have
$$\sigma_D(s,x)=\omega_s(j(x))=\omega_t(j(y))=\sigma_D(t,y).$$
If $(s,x)\sim(t,y)$ by \ref{r2} of Definition \ref{relation}, then $s,t \in E(\Se)$ and $x=y$. By \ref{p1}, we have
$$\sigma_D(s,x)=\omega_s(j(x))=j(x)=j(y)=\omega_t(j(y))=\sigma_D(t,y).$$
Therefore, we obtain a well-defined function
\begin{align*}
\sigma\colon \qquad E & \longrightarrow Z\\
[s,x] &\longmapsto \omega_s(j(x)).
\end{align*}

We claim that $\sigma$ is an $\Se$-function. In fact, let $s\in\Se$ and let $[p,x] \in E_{s^*}$. Thus, there exists $(q,y)\in D_{s^*}$ such that $(p,x)\approx(q,y)$. Since $(q,y)\in D_{s^*}$, we have that $(s,q)\in\Se^{(2)}$ and $y \in X_{q^*s^*sq}$ and, moreover, $\eta_s([p,x])=\eta_s([q,y])=[sq,y]$. Since $j$ is an $\Se$-function, we have that $j(y)\in Z_{q^*s^*sq}$. By \ref{pglob}, $Z_{q^*s^*sq}=Z_{q^*s^*}$ which is the domain of $\omega_{sq}=\omega_s\circ\omega_q$ by \ref{eglob}. Hence,
$$\sigma([p,x])=\sigma([q,y])=\omega_q(j(y)) \in Z_{s^*}.$$
Moreover,
$$\omega_s\big(\sigma([p,x])\big)= \omega_s\big(\omega_q(j(y))\big)= \omega_{sq}(j(y))= \sigma([sq,y])= \sigma\big(\eta_s([p,x])\big),$$
and $\sigma$ is indeed an $\Se$-function.

Now, given $x \in X$, by \ref{p1}, choose $e \in E(\Se)$ such that $x \in X_e$. Notice that
$$\big(\sigma\circ i\big)(x)=\sigma([e,x])=\omega_e(j(x))=j(x),$$
which proves that the diagram
$$\xymatrix{ X \ar[rr]^{i} \ar[rd]_{j}& & E \ar@{..>}[ld]^{\sigma}\\ & Z & }$$
commutes. 

It remains to prove the uniqueness part. For that, assume that $\sigma':E\to Z$ is an $\Se$-function such that $\sigma'\circ i=j$. Let $[s,x]\in E$ and notice that $(s^*s,x)\in D_{s^*}$, since $(s,s^*s)\in\Se^{(2)}$ and $x\in X_{s^*s}$. Thus, $[s^*s,x]\in E_{s^*}$ and, being $\sigma'$ an $\Se$-function, we have that $\sigma'([s^*s,x])\in Z_{s^*}$ and moreover
$$\sigma([s,x])= \omega_s(j(x))= \omega_s\big((\sigma'\circ i)(x)\big)= \omega_s\big(\sigma'([s^*s,x])\big)= \sigma'\big(\eta_s([s^*s,x])\big)= \sigma'([s,x]),$$
from where we conclude that $\sigma'=\sigma$, as desired.
\end{proof}

We finally prove that $(E,\eta)$ is a universal globalization of $(X,\theta)$.

\begin{theorem}\label{o_teorema_2}
Let $\Se$ be an inverse semigroupoid. For every $\Se$-set $(X,\theta)$, there exist a global $\Se$-set $(E,\eta)$ and an $\Se$-function $i\colon X\to E$ such that $(i,E,\eta)$ is a universal globalization of $(X,\theta)$.
\end{theorem}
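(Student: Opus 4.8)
The plan is to obtain this statement as a short consequence of Theorem \ref{o_teorema}. As observed just before Definition \ref{relation}, a globalization of $(X,\theta)$ which is also an $\mathscr{A}(\Se)$-reflector of $(X,\theta)$ is automatically a universal globalization: for any globalization $(\psi,Z,\theta^Z)$ of $(X,\theta)$ the pair $(Z,\theta^Z)$ is in particular a global $\Se$-set, so the reflector property established in Theorem \ref{o_teorema} supplies the unique mediating $\Se$-function $\sigma\colon E\to Z$ with $\sigma\circ i=\psi$. Since Theorem \ref{global_action} already gives that $(E,\eta)$ is a global $\Se$-set and Theorem \ref{o_teorema} gives that $(i,E,\eta)$ is a reflector, the entire statement reduces to one missing ingredient: that the $\Se$-function $i\colon X\to E$ is an \emph{embedding} in the sense of Definition \ref{embedding}.

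To verify this I would use the equivalent criterion recorded after Definition \ref{embedding}: $i$ must be injective, and for all $x\in X$ and $s\in\Se$ one must have $x\in X_{s^*}$ if and only if both $i(x)\in E_{s^*}$ and $\eta_s(i(x))\in i(X)$, with $\eta_s(i(x))=i(\theta_s(x))$ in that case. Injectivity is quick: if $i(x)=[e,x]=[f,y]=i(y)$ with $e,f\in E(\Se)$ and $x\in X_e$, $y\in X_f$, then $(e,x)\approx(f,y)$, and Lemma \ref{lema_acao} applied with the membership $x\in X_e=X_{e^*}$ forces $y\in X_f$ and $x=\theta_e(x)=\theta_f(y)=y$ by \ref{p1}. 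The forward direction of the criterion is just the $\Se$-function property of $i$, already checked inside the proof of Theorem \ref{o_teorema}: if $x\in X_{s^*}$ then $i(x)\in E_{s^*}$ and $\eta_s(i(x))=i(\theta_s(x))\in i(X)$.

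The substantive part, which I expect to be the main obstacle, is the converse. From $i(x)=[e,x]\in E_{s^*}$ I would pick a representative $(q,y)\in D_{s^*}$ of this class, so that $(s,q)\in\Se^{(2)}$, $y\in X_{q^*s^*sq}$, and $\eta_s(i(x))=[sq,y]$; then Lemma \ref{lema_acao} applied to $(e,x)\approx(q,y)$ yields $y\in X_{q^*}$ and $x=\theta_q(y)$. Using $[sq,y]\in i(X)$ I would pick $z\in X$ and $g\in E(\Se)$ with $z\in X_g$ and $(sq,y)\approx(g,z)$; a second application of Lemma \ref{lema_acao} gives $y\in X_{(sq)^*}$ and $\theta_{sq}(y)=\theta_g(z)=z$. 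Now $y\in X_{q^*}\cap X_{(sq)^*}$, which by \ref{p3} is exactly the domain of $\theta_s\circ\theta_q$; since $\theta_q$ is a bijection (Proposition \ref{inverse}) it carries this set into $X_q\cap X_{s^*}$, so $x=\theta_q(y)\in X_{s^*}$, and moreover $\theta_s(x)=\theta_s(\theta_q(y))=\theta_{sq}(y)=z$, whence $\eta_s(i(x))=[sq,y]=[g,z]=i(z)=i(\theta_s(x))$.

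The only thing requiring care throughout this converse is the bookkeeping of which subsets $X_{(\cdot)}$ the chosen representatives belong to; no idea beyond Lemmas \ref{lema_acao} and \ref{lemaind2}, axiom \ref{p3}, and Proposition \ref{inverse} is needed. Once $i$ is known to be an embedding, $(i,E,\eta)$ is a globalization of $(X,\theta)$, and the reduction of the first paragraph finishes the proof.
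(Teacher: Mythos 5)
Your proposal is correct and follows essentially the same route as the paper: both take $(i,E,\eta)$ from Theorem \ref{o_teorema}, note that a globalization which is an $\mathscr{A}(\Se)$-reflector is automatically universal, and then reduce everything to showing that $i$ is an embedding, proving injectivity via Lemma \ref{lema_acao} and the converse direction by choosing a representative of $i(x)$ in $D_{s^*}$, applying Lemma \ref{lema_acao} (twice) and \ref{p3} exactly as the paper does. The only differences are notational (your $(q,y)$, $z$, $g$ correspond to the paper's $(r,z)$, $y$, $f$), so there is nothing to add.
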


\begin{proof}
Let $(i,E,\eta)$ be as in the proof of Theorem \ref{o_teorema}. So, it only remains to prove that $i\colon X \to E$ is an embedding.

We first prove that $i$ is injective. Indeed, let $x,y\in X$ be such that $i(x)=i(y)$. Let $e,f\in E(S)$ such that $x \in X_e$ and $y\in X_f$ and, thus, we have $[e,x]=[f,y]$. Using Lemma \ref{lema_acao}, we infer that $x=\theta_e(x)=\theta_f(y)=y$.

Finally, we prove that the action $\eta$ of $\Se$ on $E$ induces the action $\theta$ of $\Se$ on $X$. Let $x,y \in X$ and $s\in \Se$ such that $i(x) \in E_{s^*}$ and $\eta_s(i(x))=i(y)$. We need to show that $x\in X_{s^*}$ and $\theta_s(x)=y$. Using \ref{p1}, choose $e,f\in E(\Se)$ such that $x \in X_e$ and $y \in X_f$. Since $[e,x]=i(x) \in E_{s^*}$, there exists $(r,z)\in D_{s^*}$ such that $(e,x)\approx(r,z)$. Hence,
$$[f,y]=i(y)=\eta_s(i(x))=\eta_s([e,x])=\eta_s([r,z])=[sr,z].$$
By Lemma \ref{lema_acao}, since $x \in X_e$, $y \in X_f$, $(e,x)\approx(r,z)$ and $(f,y)\approx(sr,z)$, we have that $z \in X_{(sr)^*} \cap X_{r^*}$, $\theta_r(z)=\theta_e(x)=x$ and $\theta_{sr}(z)=\theta_f(y)=y$. By \ref{p3}, $x=\theta_r(z)\in X_{s^*}$ and
$$\theta_s(x)=\theta_s(\theta_r(z))=\theta_{sr}(z)=y,$$
which finishes the proof.
\end{proof}

We apply now our construction in the two examples in the end of section 2.

\begin{example}\label{ex_1_pt3}
Let $\Se$ and $(X,\theta)$ be as in Example \ref{ex_1_pt2}. In this case, we have
\begin{align*}
D=\big\{(b,1),(b,2),(b^*b,1),(b^*b,2),(b^*,1),(b^*,4),(bb^*,1),(bb^*,4),\\(a,1),(a^*a,1),(a^*,3),(a^*,4),(aa^*,3),(aa^*,4)\big\}.
\end{align*}
and five equivalence classes with respect to relation $\approx$:
\begin{align*}
\overline{1} &=\{(b,1),(a^*,4),(b^*,1),(a^*a,1),(b^*b,1),(bb^*,1)\}, \\
\overline{2} & =\{(b^*b,2),(b^*,4)\}, \\
\overline{3} & =\{(a^*a,3)\}, \\
\overline{4} & =\{(b,2),(bb^*,4),(a,1),(aa^*,4)\}, \\
\overline{5} & =\{(a^*,3)\}.
\end{align*}
The family $\{D_s\}_{s \in \Se}$, as in Equation (\ref{D_s}) is given by
\begin{align*}
D_{a^*} & = D_{a^*a} = \{(a^*,3),(a^*,4),(b,1),(b^*,1),(a^*a,1),(bb^*,1),(b^*b,1)\}, \\
D_{a} & = D_{aa^*} = \{(a,1),(aa^*,3),(aa^*,4)\}, \\
D_{b^*} & = D_{b^*b} = \{(a^*,3),(a^*,4),(a^*a,1),(b,1),(b^*b,1),(b^*b,2),(b^*,1),(b^*,4),(bb^*,1)\}, \\
D_{b} & = D_{bb^*} = \{(a^*,3),(a^*,4),(b,1),(b,2),(b^*,1),(a^*a,1),(bb^*,1),(bb^*,4),(b^*b,1)\}.
\end{align*}
Thus, the globalization $(E,\eta)$ of $(X,\theta)$ is given by $\eta=(\{E_s\}_{s \in \Se},\{\eta_s\}_{s \in \Se})$ in which
\begin{align*}
& E_{a^*} = E_{a^*a} = \{\overline{1},\overline{5}\},
& E_{a} = E_{aa^*} = \{\overline{3},\overline{4}\}, \\
& E_{b^*} = E_{b^*b} = \{\overline{1},\overline{2},\overline{5}\},
& E_{b} = E_{bb^*} = \{\overline{1},\overline{4},\overline{5}\},
\end{align*}
$\eta_b\colon E_{b^*} \to E_b$ and $\eta_a\colon E_{a^*}\to E_a$ are given by
$$\begin{array}{rcl}
\eta_b(\overline{1}) & = & \eta_b([b^*b,1])=[b,1]=\overline{1} \\
\eta_b(\overline{2}) & = & \eta_b([b^*b,2])=[b,2]=\overline{4} \\
\eta_b(\overline{5}) & = & \eta_b([a^*,3])=[a^*,3]=\overline{5}
\end{array},
\qquad
\begin{array}{rcl}
\eta_a(\overline{1}) & = & \eta_a([a^*a,1])=[a,1]=\overline{4} \\
\eta_a(\overline{5}) & = & \eta_a([a^*,3])=[aa^*,3]=\overline{3} \\
& &
\end{array},$$
and $\eta_{a^*}=\eta_a^{-1}$, $\eta_{b^*}=\eta_b^{-1}$ and $\eta_{e}=\id_{E_e}$ for $e \in E(\Se)=\{b^*b, bb^*,a^*a, aa^*\}$. The $\Se$-function $i \colon X \to E$ is given by $i(1)=\overline{1}$, $i(2)=\overline{2}$, $i(3)=\overline{3}$ e $i(4)=\overline{4}$.
\end{example}

In the next example, we first restrict the global action of Example \ref{ex_2_pt2} and then compare the globalization of the restricted action with the original partial action.

\begin{example}\label{ex_2_pt3}
Let $\Se$ and $(Y,\theta^Y)$ be as in Example \ref{ex_2_pt2}. If we restrict the action $\theta^Y$ to $X=\{1,2\}$, we obtain a partial action $\theta=(\{X_s\}_{s \in \Se},\{\theta_s\}_{s\in \Se})$ of $\Se$ on $X$ such that $X_{a^*}=\{1\}$, $X_a=\{2\}$, $\theta_a\colon X_{a^*} \to X_a$ and $\theta_{a^*}\colon X_a \to X_{a^*}$ are evident, $X_s=X$ and $\theta_s=\id_X$ if $s \neq a,a^*$.

It is clear that $(j,Y,\theta^Y)$, in which $j\colon X \to Y$ is the inclusion map, is a globalization of $(X,\theta)$. However, this is not a universal globalization of $(X,\theta)$.

Indeed, applying our construction in this case, we obtain $D=\Se \times X$ and we have four equivalence classes with respect to relation $\approx$:
\begin{align*}
\overline{1} & =\{(b^*b,1),(bb^*,1),(a^*a,1),(aa^*,1),(b,1),(b^*,1),(a^*,2)\}, \\
\overline{2} & =\{(b^*b,2),(bb^*,2),(a^*a,2),(aa^*,2),(b,2),(b^*,2),(a,1)\}, \\
\overline{3} & =\{(a,2)\}, \\
\overline{4} & =\{(a^*,1)\}.
\end{align*}
The family $\{D_s\}_{s \in \Se}$, as in Equation (\ref{D_s}) is simply given by
$$D_{s^*}=\{(p,x)\in D : (s,p) \in\Se^{(2)} \}$$
and, thus, the globalization $(E,\eta)$ of $(X,\theta)$ is given by $\eta=(\{E_s\}_{s \in \Se},\{\eta_s\}_{s \in \Se})$ in which
\begin{align*}
& E_{a^*} = E_{a^*a} = \{\overline{1},\overline{2},\overline{4}\},
& E_{a} = E_{aa^*} = \{\overline{1},\overline{2},\overline{3}\}, \\
& E_{b^*} = E_{b^*b} = \{\overline{1},\overline{2},\overline{4}\},
& E_{b} = E_{bb^*} = \{\overline{1},\overline{2},\overline{4}\},
\end{align*}
$\eta_b\colon E_{b^*} \to E_b$ and $\eta_a\colon E_{a^*}\to E_a$ are given by
$$\begin{array}{rcl}
\eta_b(\overline{1}) & = & \eta_b([b^*b,1])=[b,1]=\overline{1} \\
\eta_b(\overline{2}) & = & \eta_b([b^*b,2])=[b,2]=\overline{2} \\
\eta_b(\overline{4}) & = & \eta_b([a^*,1])=[a^*,1]=\overline{4}
\end{array},
\qquad
\begin{array}{rcl}
\eta_a(\overline{1}) & = & \eta_a([a^*a,1])=[a,1]=\overline{2} \\
\eta_a(\overline{2}) & = & \eta_a([a^*a,2])=[a,2]=\overline{3} \\
\eta_a(\overline{4}) & = & \eta_a([a^*,1])=[aa^*,1]=\overline{1}
\end{array},$$
and $\eta_{a^*}=\eta_a^{-1}$, $\eta_{b^*}=\eta_b^{-1}$ and $\eta_{e}=\id_{E_e}$ for $e \in E(\Se)=\{b^*b, bb^*,a^*a, aa^*\}$. Moreover, the $\Se$-function $i\colon X \to E$ is given by $i(1)=\overline{1}$ and $i(2)=\overline{2}$.

Notice that, considering the globalization $(j,Y,\theta^Y)$, the mediating function $\sigma\colon E \to Y$ given by the universal property of $(i,E,\eta)$ is given by
$$\begin{array}{rcl}
\sigma(\overline{1}) & = & \sigma([b^*b,1]) = \theta_{b^*b}^Y(j(1)) = \theta_{b^*b}^Y(1) = 1 \\
\sigma(\overline{2}) & = & \sigma([b^*b,2]) = \theta_{b^*b}^Y(j(2)) = \theta_{b^*b}^Y(2) = 2 \\
\sigma(\overline{3}) & = & \sigma([a,2]) = \theta_a^Y(j(2)) = \theta_a^Y(2) = 3 \\
\sigma(\overline{4}) & = & \sigma([a^*,1]) = \theta_{a^*}^Y(j(1)) = \theta_{a^*}^Y(1) = 3
\end{array}$$
and hence, $\sigma$ is not injective. Moreover, the reader can check that any function $\sigma'\colon Y \to E$ such that $\sigma' \circ j=i$ is not an $\Se$-function.
\end{example}

The last example shows that the mediating function given by the universal property of the globalization $(E,\eta)$ may not be injective (and, hence, may not be an embedding). However, we can guarantee its injectivity in some especial subsets of $E$.

Indeed, for each $u \in \Se^{(0)}$, set
\begin{align}\label{D_u}
D_u=\{(s,x) \in D : c(s)=u\}.
\end{align}
and let $E_u$\label{E_u} be the image of $D_u$ by the quotient map of $D$ by the equivalence relation $\approx$.

We end this paper by showing that the mediating function given by the universal property of $(i,E,\eta)$ is injective in each $E_u$.

\begin{proposition}\label{a_proposicao}
Let $(X,\theta)$ be an $\Se$-set and $(j,Z,\omega)$ a globalization of $(X,\theta)$. Given any $u \in \Se^{(0)}$, the mediating function $\sigma\colon E \to Z$ given by the universal property of the globalization $(i,E,\eta)$ is injective in $E_u$.
\end{proposition}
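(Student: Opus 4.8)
The plan is to prove the stronger assertion: if $(s,x),(t,y)\in D_u$ (that is, $c(s)=c(t)=u$) satisfy $\sigma([s,x])=\sigma([t,y])$, then already $(s,x)\sim(t,y)$ via item \ref{r1} of Definition \ref{relation}, and hence $[s,x]=[t,y]$. Since every point of $E_u$ has, by definition, a representative in $D_u$, this yields injectivity of $\sigma$ on $E_u$. The crucial structural feature is that, $s$ and $t$ having the same codomain $u$, the products $t^*s$ and $s^*t$ are automatically defined, which is exactly what lets us land inside the relation $\sim$ rather than merely inside its generated equivalence $\approx$.

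First I would fix the bookkeeping. Since $(s,x)\in D$ we have $x\in X_{s^*s}$; as $j$ is an $\Se$-function and $\omega$ is global, $j(x)\in Z_{s^*s}=Z_{s^*}$, and likewise $j(y)\in Z_{t^*}$. Thus $z:=\sigma([s,x])=\omega_s(j(x))=\omega_t(j(y))$ is defined and lies in $Z_s\cap Z_t$, and by Proposition \ref{inverse} applied to $\omega$ we get $\omega_{s^*}(z)=j(x)$ and $\omega_{t^*}(z)=j(y)$. Also $(t^*,s),(s^*,t)\in\Se^{(2)}$, because $d(t^*)=c(t)=u=c(s)$.

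Next I would extract the two memberships that feed the embedding property of $j$. Applying Proposition \ref{ran_composition} to $\omega$ with the composable pair $(t^*,s)$ gives $\omega_{t^*}(Z_t\cap Z_s)=Z_{t^*}\cap Z_{t^*s}$, so from $z\in Z_t\cap Z_s$ we obtain $j(y)=\omega_{t^*}(z)\in Z_{t^*s}=Z_{(s^*t)^*}$; and since $\omega$ is global, $\omega_{s^*t}=\omega_{s^*}\circ\omega_t$ by \ref{eglob}, whence $\omega_{s^*t}(j(y))=\omega_{s^*}(z)=j(x)\in j(X)$. Now $j$ is an embedding (it is the first component of the globalization $(j,Z,\omega)$), so the characterization of embeddings recorded after Definition \ref{embedding}, applied to $y$ and the element $s^*t$, forces $y\in X_{(s^*t)^*}=X_{t^*s}$ together with $\omega_{s^*t}(j(y))=j(\theta_{s^*t}(y))$, i.e. $j(x)=j(\theta_{s^*t}(y))$; injectivity of $j$ then gives $x=\theta_{s^*t}(y)$.

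Finally, by Proposition \ref{inverse} we have $\theta_{s^*t}=\theta_{t^*s}^{-1}$, so $x=\theta_{s^*t}(y)$ rewrites as $x\in X_{s^*t}$ and $\theta_{t^*s}(x)=y$; together with $(t^*,s)\in\Se^{(2)}$ this is precisely item \ref{r1} of Definition \ref{relation}, so $(s,x)\sim(t,y)$ and therefore $[s,x]=[t,y]$. I do not expect a serious obstacle: the argument is a direct unwinding of the definition of $\sigma$ together with the reflecting property of the embedding $j$. The only point requiring genuine care is keeping track of which subsets of $Z$ (resp. $X$) are the domains and ranges of the various $\omega_r$ (resp. $\theta_r$) and of the composabilities involved, all of which are available thanks to $c(s)=c(t)$.
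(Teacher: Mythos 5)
Your proof is correct and follows essentially the same route as the paper's: pass to representatives in $D_u$, use $c(s)=c(t)=u$ to get the needed composability, use globality of $\omega$ to split $\omega_{t^*s}$, and invoke the embedding property of $j$ to pull the relation back to $X$ and land in item \ref{r1} (your computation is just the mirror image, applying the embedding characterization at $y$ with $s^*t$ where the paper works at $x$ with $t^*s$). Incidentally, your final citation of \ref{r1} is the correct one; the paper's reference to \ref{r2} at that point is a typo.
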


\begin{proof}
Let $[s,x], [t,y] \in E_u$ such that $\sigma([s,x])=\sigma([t,y])$. With no loss in generality, we may assume $(s,x), (t,y) \in D_u$.  This means that
$$\omega_s(j(x))=\sigma([s,x])=\sigma([t,y])=\omega_t(j(y))$$
and, hence, $\omega_s(j(x))\in Z_t \cap Z_s$. Since $c(s)=c(t)=u$, we have $(t^*,s)\in S^{(2)}$ and, since $\omega$ is a global action, we have $\omega_{t^*s}=\omega_{t^*}\circ\omega_s$ by \ref{eglob}. Thus, $j(x) \in \omega_s^{-1}(Z_t \cap Z_s)=Z_{s^*t}$ and
$$\omega_{t^*s}(j(x))=\omega_{t^*}\big(\omega_s(j(x))\big)=\omega_{t^*}\big(\omega_t(j(y))\big)=j(y).$$

Since $j$ is an embedding, $x \in X_{s^*t}$ and $\theta_{t^*s}(x)=y$, which implies that $(s,x)\sim (t,y)$ by \ref{r2}.
\end{proof}

%\printbibliography
\bibliographystyle{acm}
\bibliography{references.bib}
\end{document}